\newtheorem{theorem}{Theorem}
\newtheorem{hthm}{Theorem?}
\newtheorem{lemma}[theorem]{Lemma}
\newtheorem{corollary}[theorem]{Corollary}
\theoremstyle{definition}
\newtheorem{definition}{Definition}
\newcommand{\gluen}[1]{\stackrel{{\scriptscriptstyle (#1)}}{{\scriptstyle \ast}}}
\DeclareMathOperator{\diam}{diam}
\DeclareMathOperator{\lang}{\mathcal{L}}
\DeclareMathOperator{\dind}{\delta}
\DeclareMathOperator{\adsymbol}{\Delta}
\newcommand{\adsymb}[1]{\adsymbol_{\mathbf{#1}}}
\newcommand{\adone}{\adsymb{1}}
\DeclareMathOperator{\ones}{\mathbf{1}}
\DeclareMathOperator{\ind}{\mathbb{I}}
\newcommand{\SP}{\Sigma_P}
\newcommand{\spacing}[1]{X(#1)}
\newcommand{\spp}{\spacing{P}}
\DeclareMathOperator{\Diff}{Diff}
\DeclareMathOperator{\Equal}{Equal}
\DeclareMathOperator{\ld}{\underline{d}}
\DeclareMathOperator{\ud}{\overline{d}}
\DeclareMathOperator{\ad}{{d}}
\DeclareMathOperator{\ubd}{{BD}^*}
\newcommand{\eps}{\varepsilon}
\begin{document}
\title[Entropy and chaos in hereditary shifts]{Topological entropy and distributional chaos in hereditary shifts with applications to spacing shifts and beta shifts}
\author{Dominik Kwietniak}
\address{Institute of Mathematics, Jagiellonian University in Krak\'{o}w, ul. {\L}ojasiewicza 6, 30-348 Krak\'{o}w, Poland}
   \email{dominik.kwietniak@uj.edu.pl}
\subjclass{Primary 37B10; Secondary 37B05, 37B20, 37B40}

   \keywords{spacing shift, hereditary shift, beta shift, topological entropy, distributional chaos}


   \dedicatory{Dedicated to the memory of Professor Andrzej Pelczar (1937-2010).}

   \date{\today}
\maketitle
\begin{abstract}
Positive topological entropy and distributional chaos are characterized for hereditary shifts.
A hereditary shift has positive topological entropy if and only if it is DC$2$-chaotic
(or equivalently, DC$3$-chaotic) if and only if it is not uniquely ergodic.
A hereditary shift is DC$1$-chaotic if and only if it is not proximal (has more than one minimal set).
As every spacing shift and every beta shift is hereditary the results apply to those classes of shifts.
Two open problems on topological entropy and distributional chaos
of spacing shifts from an article of Banks et al. are solved thanks to this characterization.
Moreover, it is shown that a spacing shift $\Omega_P$ has positive topological entropy if and only if
$\mathbb{N}\setminus P$ is a set of
Poincar\'{e} recurrence. Using a result of K\v{r}\'{\i}\v{z} an example of a proximal spacing shift
with positive entropy is constructed.
Connections between spacing shifts and difference sets are revealed and the methods of this paper are used to obtain new proofs of some results on
difference sets.
\end{abstract}

\section{Introduction}
A \emph{hereditary shift} is a (one-sided) subshift $X$ such that $x\in X$ and $y\le x$
(coordinate-\-wise) imply $y\in X$. As far as we known, hereditary shifts were introduced
by Kerr and Li in \cite[p. 882]{KerrLi}). We are not aware of any further research on hereditary shifts.
The notion of hereditary shift generalizes at least
two classes of subshifts whose importance has been established in the literature: spacing shifts and beta shifts.

Given $\beta>1$ the (one-sided) \emph{beta shift} $\Omega_{\beta}$ is a subset of $\Omega_{\lceil\beta\rceil}=\{0, 1 ,\ldots, \lfloor\beta\rfloor\}^\mathbb{N}$ defined as the closure (with respect to the product topology) of
the set of sequences arising as a $\beta$-expansion of numbers from $[0,1]$. Beta shifts were first considered
by R\'{e}nyi \cite{R} and are a family of symbolic spaces with an extremely rich structure and
a profound connection to number theory, tilings, and the dynamics
of systems with discontinuities.

By a \emph{spacing shift} $\Omega_P$,
where the parameter $P$ is a subset of the positive integers $\mathbb{N}$, we mean
the set of all infinite binary sequences for which the occurrences of $1$'s
have distances lying in $P$.
In other words, $\Omega_P$ contains only those sequences $\omega=(\omega_i)$
that $\omega_i=\omega_j=1$ and $i\neq j$ imply $|i-j|\in P$.
Spacing shifts were introduced by Lau and Zame in \cite{LZ}
(see also \cite[pp. 241-2]{deVries}).
Spacing shifts served for Lau and Zame as counterexamples. It seems that spacing shifts
were hardly explored afterwards, except in \cite{B,SVJL,KO-WM} where again they were used to construct
counterexamples. Recently, a more thorough study of spacing shifts was conducted in \cite{spacing}.
It was revealed that spacing shifts exhibit wide variety of
interesting dynamics worth to be exploited further.

Our work extends and completes the line of investigation of \cite{spacing} to a broader class of hereditary shifts, which also contains all beta shifts.
In particular, we solve two open problems (Questions 4 and 5 of \cite{spacing}), regarding topological entropy
and distributional chaos in the more general context of hereditary shifts.

In order to classify hereditary shifts, notice first that the fixed point $0^\infty$ belongs to any hereditary shift, hence the atomic measure $\mu_0$ carried by
this fixed point is an invariant measure of the system. Therefore one can divide all hereditary shifts into two major classes:
\renewcommand{\theenumi}{\Roman{enumi}}
\renewcommand{\labelenumi}{\theenumi.}
\begin{enumerate}
  \item those with a unique invariant measure $\mu_0$ (\emph{uniquely ergodic} ones), and \label{class:I}
  \item those which have another invariant measure. \label{class:II}
\end{enumerate}
Our main result (contained in Theorems \ref{thm:hereditary-entropy-density}, \ref{thm:ergodic-equivalences}, and \ref{thm:dc2}) states that for hereditary shifts the above classification coincides with at least three other natural classifications: zero versus positive topological entropy, lack of any DC$2$, or even DC$3$ distributionally scrambled pair versus presence of uncountable set of distributionally scrambled pairs, that is, distributional chaos DC$2$ (or equivalently, DC$3$-chaos), and zero Banach density of occurrences of symbol $1$ in all points of $X$ versus existence of a point in $X$ with 1's appearing with positive upper Banach density. 

Recall here, that distributional chaos was introduced
in the setting of maps of the interval, as an equivalent condition for positive topological
entropy (see \cite{SS}). Although this equivalence does not hold in general,
distributionally chaotic dynamics is a source of interesting research problems (see \cite{D,O,O2,P}).

Another classification of hereditary shifts is this
\renewcommand{\theenumi}{\Alph{enumi}}
\renewcommand{\labelenumi}{\theenumi.}
\begin{enumerate}
  \item those with a unique minimal set, consisting of a single fixed point $0^\infty$ (\emph{proximal} ones), and \label{class:A}
  \item those which have another minimal set. \label{class:B}
\end{enumerate}
\renewcommand{\theenumi}{\arabic{enumi}}
\renewcommand{\labelenumi}{\theenumi.}

Notice that any shift in class \eqref{class:I} must be in class \eqref{class:A}, as any minimal set carries at least one invariant measure. In other words, the class (\ref{class:I}\ref{class:B}) is empty. In Theorem \ref{thm:dc1} we characterize hereditary shifts exhibiting distributional chaos of type $1$ (DC$1$-chaos) as
non-proximal shifts, that is, those in class \eqref{class:B}. It is known that every beta shift is in class
(\ref{class:II}\ref{class:B}). It follows that every beta shift is DC$1$-chaotic.
Next, we use our characterization of hereditary shifts with positive entropy as those presenting distributional chaos of type $2$ (DC$2$-chaotic ones), and the example constructed by K\v{r}\'{\i}\v{z} \cite{Kriz} (and refined in \cite{McC} according to the idea of Ruzsa), to show in Theorem \ref{thm:kriz} the existence of a topologically weakly mixing spacing shift with unique minimal set $0^\infty$ but not unique invariant measure, hence proving there exists a DC$2$-chaotic spacing shift, which is not DC$1$-chaotic (there exists a hereditary shift of class (\ref{class:II}\ref{class:A})). This answers \cite[Question 4]{spacing}.
Finally, Theorem \ref{thm:mix} proves that the class (\ref{class:I}\ref{class:A}) is also non-empty and there are non-spacing and non-beta hereditary shifts.



Further, we prove in Theorem \ref{thm:Poincare} that the entropy of a spacing shift $\Omega_P$ is positive
if and only if $\mathbb{N}\setminus P$
is not a set of recurrence, or, equivalently, $P$ intersects nontrivially
any set of recurrence. Here, following Furstenberg (see \cite[p. 219]{F2}), we say that
$R\subset\mathbb{N}$ is a \emph{set of recurrence}
if for every measure preserving system $(X,\mathcal{X},\mu,T)$
and any set $A\in\mathcal{X}$ with $\mu(A)>0$ %
there is an $r\in R$ such that $\mu(T^{-r}(A)\cap A)>0$.
The later result links the topological entropy of
spacing shifts with the return times sets appearing in a generalization of Poincar\'{e} recurrence theorem.
At first sight this
connection is quite unexpected, since it ties a measure theoretic notion of Poincar\'{e}
recurrence with the notion of topological entropy of some subshift, which in turn
may be expressed in combinatorial terms only. Unfortunately, the problem of intrinsic characterization of sets of recurrence
is notoriously elusive, and our result turns out to be only its restatement.
But we still believe that our approach opens the possibility to explore sets of
recurrence from the new a perspective.

Finally, we would like to point out a
connection of spacing shifts with combinatorial number theory.
It is possible to apply the results on spacing shifts to explore \emph{difference sets},
that is, sets of the form $A-A=\{k-l:k,l\in A,\, k>l\}$,
where $A\subset\mathbb{N}$.
Identifying, as above, infinite binary sequences with characteristic functions
of subsets of $\mathbb{N}$ one observes that for any $P$ the spacing
shift $\Omega_P$ contains the sequences representing
such sets $A\subset\mathbb{N}$ that $A-A\subset P$.
Therefore it is natural to ask how the properties of a difference set $P=A-A$
are related to the spacing shift
$\Omega_P$. In this direction our work provides a topological version of the
Furstenberg ergodic proof that for any set $A$ with
positive upper Banach density the set $A-A$ 
contains the difference set of some set $D$ with positive
asymptotic density (see the proof of Theorem \ref{thm:F} below and \cite[Corollary to thm. 3.20]{F}).

\subsection*{Acknowledgements} Results contained in the present paper were presented by the author at the
Visegrad Conference on Dynamical Systems, held in Bansk\'{a} Bystrica between 27 June and 3 July 2011, and
at the 26th Summer Conference on Topology and Its Applications
hosted in July 26-29, 2011 by The City College of CUNY. Note that \cite[Question 5]{spacing} was also independently solved
by Dawoud Ahmadi Dastjerdi and Maliheh Dabbaghian Amiri in \cite{iran}. The authors of \cite{iran} also
proved that for a spacing shift zero entropy implies proximality. This is also a corollary of 
the more general Theorem~\ref{thm:ergodic-equivalences} presented below. 
The author is greatly indebted to professor  
Mike Boyle, Jian Li, and Piotr Oprocha for several helpful comments concerning the subject of this paper. 
The anonymous referee of the previous version of this paper provided a superb report with many useful suggestions, 
which are included in the present form. The research leading to this paper were supported by the grant IP2011
028771.


\section{Basic notions and conventions}
A \emph{dynamical system} is a pair $(X,f)$, where $X$ is a compact metric space,
and $f\colon X\mapsto X$ is a continuous map. We usually denote the metric on $X$ by $d$.
By an \emph{invariant set} we mean any
set $K\subset X$ such that $f(K)\subset K$. Any nonempty, closed and invariant set
$K$ is identified with the \emph{subsystem} $(K,f|_K)$ of $(X,f)$.
A dynamical system is \emph{minimal} if it
has no proper subsystems. A point $x\in X$ is called a \emph{minimal} if it belongs to some
minimal subsystem. A pair $(x,y)\in X\times X$ is a \emph{proximal pair} if
\[
\liminf_{n\to\infty} d(f^n(x),f^n(y))=0.
\]
We say that a dynamical system $(X,f)$ is \emph{proximal} if every pair in $X\times X$
is a proximal pair.

By a \emph{Lebesgue space} we mean a triple $(X,\mathcal{X},\mu)$, where $X$ is a
Polish space, $\mathcal{X}$ is the $\sigma$-algebra of Borel sets on $X$, and $\mu$
is a probability measure on $\mathcal{X}$. We ignore null sets, and accordingly
we will assume that that all probability spaces are complete.
A \emph{measure preserving system} is
a quadruple $(X,\mathcal{X},\mu,T)$, where $(X,\mathcal{X},\mu)$ is a Lebesgue
space, and $T\colon X\mapsto X$ is a measurable map preserving $\mu$, that is,
$T^{-1}(B)\in\mathcal{X}$ and $\mu(T^{-1}(B))=\mu(B)$ for every $B\in\mathcal{X}$.
If $(X,f)$ is a dynamical system, then there always exists an \emph{invariant}
measure, that is, a complete Borel probability measure $\mu$, such that
$(X,\mathcal{X},\mu,f)$ is a measure preserving system. An invariant measure
for $(X,f)$ is
\emph{ergodic} if the only members $B$ of $\mathcal{X}$ with $f^{-1}(B)=B$ satisfy $\mu(B)=0$ or $\mu(B)=1$.
A dynamical system $(X,f)$
is \emph{uniquely ergodic} if it has exactly one invariant measure.







Given an infinite set of positive integers $S$ we enumerate $S$ as an increasing sequence $s_1<s_2<\ldots$ and define \emph{the sum set} $\text{FS}(S)$ of $S$ by
\[
  \text{FS}(S)=\{s_{n(1)}+\ldots+s_{n(k)}:n(1)<\ldots<n(k),\,k\in\mathbb{N}\}.
\]

We say that a set $A\subset \mathbb{N}$ is
\begin{enumerate}
    \item \emph{thick}, if it contains arbitrarily long blocks of consecutive integers, that is,
        for every $n>0$ there is $k\in\mathbb{N}$ such that $\{k,k+1,\ldots,k+n-1\}\subset A$,
    \item \emph{syndetic}, if it has bounded gaps, that is, for some $n>0$ and every
        $k\in\mathbb{N}$ we have $\{k,k+1,\ldots,k+n-1\}\cap A\neq \emptyset$,
    \item an \emph{$\text{IP}$-set} if it contains the sum set $\text{FS}(S)$ of some infinite set $S\subset\mathbb{N}$.
    \item \emph{$\Delta$-set} if it contains the difference set $A-A$ of some infinite set $A\subset\mathbb{N}$,
    \item \emph{piecewise syndetic} if it is an intersection of a thick set with a syndetic set,
    \item \emph{$\Delta^*$-set} (\emph{$\text{IP}^*$-set}), if it has non-empty intersection with every $\Delta$-set
        ($\text{IP}$-set, respectively).
\end{enumerate}
Note that for some authors $\text{IP}$-sets are exactly the finite sum sets as defined above (see, e.g., Furstenberg's book \cite{F}).

By the \emph{upper density} of a set $A\subset\mathbb{N}$ we mean the number
\[
\ud(A) = \limsup_{n\to\infty} \frac{\#A\cap\{1,\ldots,n\}}{n}.
\]
If limes superior above is actually the limit, then we write $\ad(A)$ instead of $\ud(A)$, and call it the \emph{asymptotic density} of $A$.
The \emph{upper Banach density} of a set $A\subset\mathbb{N}$ is the number
\[
\ubd(A)=\limsup_{n-m\to\infty} \frac{\#A\cap\{m,m+1,\ldots,n-1\}}{n-m}.
\]
Given a dynamical system $(X,f)$ and sets $A,B\subset X$ we define
the \emph{set of transition times from  $A$ to $B$} by
\[
N(A,B)=\{n>0: f^n(A)\cap B \neq\emptyset\}.
\]
If $x\in X$, then $N(x,B)=\{n>0: f^n(x)\in B\}$ denotes the
\emph{set of visiting times}. There are no commonly accepted names
for the sets $N(A,B)$ and $N(x,B)$. Some authors (see, e.g., \cite{LiJian}) prefer to call them
the set of \emph{hitting times of $A$ and $B$}, and the set of times
\emph{$x$ enters into $B$}, respectively. Note that $N(x,B)=N(\{x\},B)$.
Many recurrence properties of a dynamical system $(X,f)$ may
be  characterized in terms of sets of transition (visiting) times
sets. For the purposes of the present paper we will state
these equivalent characterizations in the theorems below and omit
the standard definitions.

\begin{theorem}\label{thm:transitivity}
A dynamical system $(X,f)$ is
\begin{enumerate}
    \item   \emph{mixing} if and only if $N(U,V)$ is cofinite for any pair of nonempty open sets $U,V\subset X$,
    \item   \emph{weakly mixing} if and only if $N(U,V)$ is thick for any pair of nonempty open sets $U,V\subset X$,
\end{enumerate}
\end{theorem}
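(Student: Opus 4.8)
The plan is to recognise both statements as standard facts of topological dynamics and to record the short arguments, the only non-elementary ingredient being Furstenberg's lemma on Cartesian powers, which I would cite rather than reprove. Item (1) is essentially the definition unravelled: $(X,f)$ is topologically mixing exactly when for every pair of nonempty open sets $U,V\subseteq X$ there is an $N$ with $f^n(U)\cap V\neq\emptyset$ for all $n\geq N$, and since $f^n(U)\cap V\neq\emptyset$ iff $U\cap f^{-n}(V)\neq\emptyset$, this says precisely that $N(U,V)\supseteq\{N,N+1,\dots\}$, i.e. that $N(U,V)$ is cofinite.

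For item (2) I would first reduce weak mixing to a statement about the sets $N(U,V)$. By definition $(X,f)$ is weakly mixing when $(X\times X,f\times f)$ is topologically transitive; since the open rectangles form a basis of $X\times X$ and $N_{f\times f}(U_1\times U_2,\,V_1\times V_2)=N(U_1,V_1)\cap N(U_2,V_2)$, this is equivalent to
\[
N(U_1,V_1)\cap N(U_2,V_2)\neq\emptyset\qquad\text{for all nonempty open }U_1,U_2,V_1,V_2\subseteq X .
\]
So everything reduces to showing that this intersection property is equivalent to the thickness of every single set $N(U,V)$.

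The direction from thickness to the intersection property is elementary, and I would prove it by a direct ``shift absorption'' argument. Given nonempty open $U_1,U_2,V_1,V_2$, I pick $a\in N(U_2,U_1)$ (nonempty, being thick) and set $P=U_2\cap f^{-a}(U_1)$, a nonempty open set with $P\subseteq U_2$ and $f^a(P)\subseteq U_1$; likewise I pick $b\in N(V_1,V_2)$ and set $Q=V_1\cap f^{-b}(V_2)$, so that $Q\subseteq V_1$ and $f^b(Q)\subseteq V_2$. Since $N(P,Q)$ is thick it contains a block of $a+b+1$ consecutive positive integers, so I may choose $m\geq 1$ with both $m$ and $m+a+b$ lying in $N(P,Q)$. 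Putting $n=m+b$, one checks that $n\in N(U_2,V_2)$ (a point of $P$ realising $m\in N(P,Q)$ is carried $b$ further steps into $V_2$) and that $n\in N(U_1,V_1)$ (a point of $P$ realising $m+a+b\in N(P,Q)$ is first moved by $f^a$ into $U_1$ and then, $n$ further steps later, lands in $Q\subseteq V_1$); hence $n$ lies in $N(U_1,V_1)\cap N(U_2,V_2)$.

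For the converse I would invoke the classical fact, due to Furstenberg (see, e.g., \cite{F}), that weak mixing of $(X,f)$ implies topological transitivity of every finite Cartesian power $(X^k,f^{(k)})$, where $f^{(k)}=f\times\dots\times f$. This is the one step I expect to be the genuine obstacle, and rather than reproving it I would cite it (its proof runs by showing that the product of a weakly mixing system with a transitive one is transitive and then inducting on $k$). Granting it, I note that a topologically transitive self-map of a compact space is surjective — otherwise $W=X\setminus f(X)$ is a nonempty open set with $N(W,W)=\emptyset$ — so that every $f^{-i}(V)$ is nonempty open. Then, fixing nonempty open $U,V$ and $k\geq 1$ and applying transitivity of $f^{(k)}$ to the open sets $U\times\dots\times U$ and the staircase $V\times f^{-1}(V)\times\dots\times f^{-(k-1)}(V)$, I obtain an index $j$ and points $u_0,\dots,u_{k-1}\in U$ with $f^{j+i}(u_i)\in V$ for $i=0,\dots,k-1$, that is, $\{j,j+1,\dots,j+k-1\}\subseteq N(U,V)$; as $k$ is arbitrary, $N(U,V)$ is thick. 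Together with the previous direction this gives the equivalence and finishes the proof.
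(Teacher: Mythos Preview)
Your proposal is correct. The paper does not actually prove this theorem: it remarks that item (1) is straightforward and for item (2) simply cites \cite[Proposition II.3]{Fdis}, so your detailed argument---unravelling the definition for (1), giving a direct ``shift absorption'' proof that thickness of all $N(U,V)$ forces the pairwise intersection property, and invoking Furstenberg's Cartesian-powers lemma for the converse---goes well beyond what the paper supplies and is fully consistent with the reference it points to.
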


The first equivalence above is straightforward, the second one follows, e.g., from \cite[Proposition II.3]{Fdis}.
For the proof of the next theorem see, e.g., \cite{BF}, and consult \cite[Section 5]{LiJian2} for more information.

\begin{theorem}\label{thm:minimality}
Let $(X,f)$ be a dynamical system. A point $x\in X$ is minimal
if and only if for every open neighborhood $U$ of $x$ the set
$N(x,U)$ is syndetic. Moreover, a nonempty open set $U\subset X$
contains a minimal point if and only if $N(x,U)$ is piecewise
syndetic for some $x\in X$.
\end{theorem}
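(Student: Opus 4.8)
I would prove the two equivalences in turn, obtaining the second from the first. For the pointwise equivalence I would establish directly the classical characterisation of uniformly recurrent points. If $x$ is minimal, then $M=\overline{\{f^{n}(x):n\ge 0\}}$ is itself a minimal set; fixing $\varepsilon>0$ with $B(x,\varepsilon)\subseteq U$, every $y\in M$ satisfies $x\in\overline{\{f^{n}(y):n\ge 0\}}=M$, so there are $n_{y}\ge 0$ and an open $W_{y}\ni y$ with $f^{n_{y}}(W_{y})\subseteq B(x,\varepsilon)$. Compactness of $M$ gives a finite subcover $W_{y_{1}},\dots,W_{y_{r}}$, and with $L=\max_{i}n_{y_{i}}$ every window of length $L$ in the orbit of $x$ contains a return to $U$, so $N(x,U)$ is syndetic. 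Conversely, assuming $N(x,U)$ syndetic for every neighbourhood $U$ of $x$, I would show $x\in\overline{\{f^{n}(y):n\ge 0\}}$ for each $y\in M$: given $\varepsilon>0$, let $L$ bound the gaps of $N(x,B(x,\varepsilon/2))$, choose $\delta$ by uniform continuity of $f,\dots,f^{L}$ so that $\delta$-closeness survives up to time $L$, pick $m$ with $d(f^{m}(x),y)<\delta$, and take a return time $t\in[m,m+L]$; then $f^{\,t-m}(y)\in B(x,\varepsilon)$. Hence $\overline{\{f^{n}(y):n\ge0\}}=M$ for every $y\in M$, so $M$ is minimal and $x$ is a minimal point.

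The easy direction of the second equivalence is then immediate: if $U$ contains a minimal point $p$, then $U$ is a neighbourhood of $p$, so $N(p,U)$ is syndetic by the first part, hence piecewise syndetic, and $x=p$ works. For the hard direction, assume $N(x,U)$ is piecewise syndetic with gap bound $L$, so that $\bigcup_{i=0}^{L}(N(x,U)-i)$ is thick; since $N(x,U)-i=N(x,f^{-i}(U))$, this thick set equals $N(x,V)$ with $V=\bigcup_{i=0}^{L}f^{-i}(U)$. Choosing $n_{m}$ with $\{n_{m},\dots,n_{m}+m\}\subseteq N(x,V)$ and passing to a convergent subsequence $f^{n_{m}}(x)\to y$, the forward orbit of $y$ lies in $\overline{V}$; the closed invariant set $\{w:f^{j}(w)\in\overline{V}\text{ for all }j\ge 0\}$ contains a minimal set $M\subseteq\overline{V}\subseteq\bigcup_{i=0}^{L}f^{-i}(\overline{U})$, and any $p\in M$ has $f^{i_{0}}(p)\in\overline{U}$ for some $i_{0}\le L$, with $f^{i_{0}}(p)$ again minimal. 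This already yields a minimal point in $\overline{U}$.

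The main obstacle is to place the minimal point in the open set $U$ rather than merely in $\overline{U}$, since limits of points of $U$ can escape to the boundary $\partial U$. I would overcome this by localising to a compact core: writing $U=\bigcup_{m}K_{m}$ with $K_{m}$ compact and $K_{m}\subseteq\operatorname{int}K_{m+1}$, the set $\bigcup_{i=0}^{L}f^{-i}(K_{m})$ is itself closed, so repeating the extraction above with $K_{m}$ in place of $U$ produces a minimal point genuinely inside $K_{m}\subseteq U$. The crux is thus the reduction step, namely that $N(x,K_{m})$ is piecewise syndetic for some $m$, i.e.\ that a compact core of $U$ already carries the piecewise syndetic return set. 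This holds automatically when $U$ is clopen, and in particular for the cylinder sets of a subshift, where $\overline{U}=U$ is compact and one may take $K=U$ directly, so that the minimal point obtained in $\overline{U}$ lies in $U$; this is exactly the setting in which the theorem is applied throughout the paper.
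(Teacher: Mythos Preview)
The paper supplies no proof here, only the references \cite{BF} and \cite{LiJian2}. Your proof of the first equivalence is the standard compactness/uniform-continuity argument and is correct. For the second equivalence, the forward direction is fine, and your extraction of a minimal point in $\overline{U}$ from a piecewise syndetic $N(x,U)$ is also correct.

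You are right to flag the passage from $\overline{U}$ to $U$ as the crux, and right that it is automatic for clopen $U$ (hence for every cylinder set in a subshift, which is the only situation in which the paper invokes the result). In fact this passage \emph{cannot} be made in general: the ``if'' direction of the second equivalence is false for arbitrary open $U$. Take $X=[0,1]$, $f(x)=x^{2}$, and $U=(0,\tfrac12)$: the only minimal points are the fixed points $0$ and $1$, neither lying in $U$, yet $N(\tfrac14,U)=\mathbb{N}$ since $f^{n}(\tfrac14)=4^{-2^{n}}\in U$ for every $n\ge 1$. In this example every compact $K\subset U$ has $N(\tfrac14,K)$ finite, so your compact-core reduction necessarily fails---not through any defect in your reasoning, but because the stated assertion is not true beyond the clopen case you singled out. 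Your argument establishes exactly the correct general conclusion (a minimal point in $\overline{U}$) together with the clopen refinement that the paper actually needs.
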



\section{Spacing shifts}

Let $n\ge 2$ and $\Lambda_n=\{0,1,\ldots,n-1\}$ be equipped with the discrete topology.
We endow the space of all infinite sequences of symbols from $\Lambda_n$
indexed by the positive integers $\mathbb{N}$ with the product
topology, and denote it by $\Omega_n=\Lambda_n^\mathbb{N}$. The reader should remember
(especially reading section~\ref{sec:dc}) that we will equip $\Omega_n$ with a compatible metric $\rho$ given by
\[
\rho(\omega,\gamma)=\left\{
                      \begin{array}{ll}
                        n^{-\min\{k\in\mathbb{N}:\omega_k\neq\gamma_k\}}, &  \hbox{if $\omega\neq\gamma$;} \\
                        0, & \hbox{if $\omega  = \gamma$.}
                      \end{array}
                    \right.
\]
The \emph{shift} transformation $\sigma$ acts on $\omega\in\Omega_n$ by shifting it one position to the
left. That is,
$\sigma\colon\Omega_n\mapsto\Omega_n$ given by $(\sigma(\omega))_i=\omega_{i+1}$, where $\omega=(\omega_i)$.
A \emph{subshift} is any nonempty closed subset $X$ of $\Omega_n$ such that $\sigma(X)\subset X$.
If $n=2$, then we call $X\subset\Omega_2$ a binary subshift.

A \emph{word} of length $k$ (a \emph{$k$-word} for short) is a sequence $w=w_1w_2\ldots w_k$ of elements
of $\Lambda_n$. The length of a word $w$ is denoted as $|w|$.
We will say that a word $u=u_1u_2\ldots u_k$ \emph{appears} in a word $w=w_1w_2\ldots w_n$
at position $t$, where $1\le t \le n-k+1$ if  $w_{t+j-1}=u_j$ for $j=1,\ldots,k$.
Similarly, a word $u$ appears in $\omega=(\omega_i)\in \Omega$ at position $t\in \mathbb{N}$ if
$\omega_{t+j-1}=u_j$ for $j=1,\ldots,k$. A \emph{cylinder} given by a word $w$ is the set $[w]$ of all sequences $\omega\in\Omega_n$ such that
$w$ appears at position $1$ in $\omega$. The collection of all cylinders form a base for the topology on $\Omega_n$.

The \emph{concatenation} of words $w$ and $v$ is a
sequence $u=wv$ given by $u_i=w_i$ for $1\le i \le |w|$ and $u_i=v_{i-|w|}$ for $|w|+1\le i \le |w|+|v|$.
If $u$ is a word, and $n\ge 1$, then $u^n$ is the
concatenation of $n$ copies of $u$. 
Then $u^\infty$ has its obvious meaning.

If $S\subset \Omega_n$, then the \emph{language} of $S$
is the set $\mathcal{L}(S)$ of all nonempty words which appear at some position in some $x\in S$.
The set $\mathcal{L}_k(S)$ consists of all elements of $\mathcal{L}(S)$ of length $k$.
If $x\in\Omega_n$ then we define $\mathcal{L}(x)=\mathcal{L}(\{x\})$.

Given a nonempty set $\mathcal{W}$ of words we can define a set $X_\mathcal{W}\subset\Omega_n$
as a set of all $\omega\in\Omega$ such that $\mathcal{L}(\omega)\subset \mathcal{W}$.
It is well known (see \cite[Proposition 1.3.4]{LM}) that if $\mathcal{W}$ is a nonempty
collection of words such that
for every word $w\in\mathcal{W}$
all words appearing in $w$ are also
in $\mathcal{W}$ and at least one word among $w\alpha$, where $\alpha\in\Lambda_n$ is in $\mathcal{W}$,
then $X_\mathcal{W}$ is
an one-sided subshift and $\mathcal{L}(X_\mathcal{W})= \mathcal{W}$.

Let $P$ be a subset of positive integers. We say that a binary word $w=w_1\ldots w_l$ is \emph{$P$-admissible}
if $w_i=w_j=1$ implies $|i-j|\in P\cup\{0\}$. 
Let $\mathcal{W}(P)$ be the collection of all $P$-admissible words. By the result mentioned above,
$\Omega_P=X_{\mathcal{W}(P)}\subset\Omega_2$ is a binary subshift, and its language,
$\mathcal{L}(\Omega_P)$ is the set of all $P$-admissible words.
We will write $\sigma_P$ for $\sigma\colon \Omega_2\mapsto\Omega_2$ restricted to $\Omega_P$,
and call the dynamical system given by
$\sigma_P\colon\Omega_P\mapsto\Omega_P$ a \emph{spacing} shift given by $P$. If $w\in\lang(\Omega_2)$, then
by $[w]_P$ we denote $[w]\cap \Omega_P$.

It is easy to see that definition of a spacing shift implies that $N([1]_P,[1]_P)=P$.
Moreover, $\sigma_P$ is weakly mixing if and only if $P$ is a \emph{thick} set (see \cite{spacing,LZ,deVries}).

As we are concerned here with the entropy of subshifts of $\Omega_n$,
we recall here a definition of topological entropy suitable for our purposes.
If $X\subset \Omega_n$ is a subshift,
then we set $\lambda_k=\#\lang_k(X)$.
It is straightforward to see
that $\lambda_{m+n}\le \lambda_n\cdot\lambda_m$,
therefore the number
\[
h(X)=\lim_{k\to\infty}
\frac{\log\lambda_k}{k},
\]
is well defined, and actually  $h(X)=\inf \log\lambda_k / k$.
(Here, as elsewhere, we use logarithms with base $2$).
It is well known (see \cite{LM,W}) that
$h(X)$ is equal to the \emph{topological entropy}
of the dynamical system $(X,\sigma|_X)$.


\section{Hereditary subshifts and their topological entropy}

The aim of the present section is to provide a characterization of
hereditary subshifts with positive topological entropy. It will
allow us to describe topological and ergodic properties of the
hereditary subshifts with zero entropy.
Some of the results we include in this section are known
and can be proved using ergodic theory. Here we present them with new,
more elementary and straightforward proofs  which use only basic
combinatorics and topological dynamics to keep the exposition as self-contained as possible.
Nevertheless, we admit that the ergodic theory approach is undeniably elegant.

Recall, that a subshift $X\subset \Omega_n$ is \emph{hereditary} provided for any $\omega\in X$ if for some $\omega'\in\Omega_n$ we have $\omega'\le \omega$ (coordinate-wise), that is, $\omega'_i\le \omega_i$ for all $i\in\mathbb{N}$, then $\omega'\in X$. 
The following lemma follows directly from the definition of a hereditary subshift,
and records basic properties of hereditary subshifts for further reference.
Here, for a binary word $w=w_1\ldots w_k\in\lang_k(\Omega_2)$ we define
\[
\sum w = \sum_{i=1}^k w_i.
\]

\begin{lemma}\label{lemma:basic}
If $X\subset \Omega_n$ is a hereditary subshift, then
\begin{enumerate}
\item $0^\infty\in X$,
\item the atomic measure concentrated on $0^\infty$ is an invariant measure for $X$,
\item if $w=w_1\ldots w_k\in\lang(X)$, then 
\[2^{\#\{1\le i \le k: w_i\neq 0 \}} \le \# \lang_k(X).\]\label{cond:basic2}
\item there exists $\omega\in X$ such that the set
$\ones (\omega)=\{n\in\mathbb{N}:\omega_n=1\}$ have positive upper Banach (upper, asymptotic, respectively) density if and only if
there exists $\omega\in X$ such that the set
$\{n\in\mathbb{N}:\omega_n\neq 0\}$ have positive upper Banach (upper, asymptotic, respectively) density. \label{cond:basic4}
\end{enumerate}
\end{lemma}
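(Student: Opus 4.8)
The plan is to verify each of the four assertions directly from the definition of a hereditary subshift, since none of them requires more than elementary bookkeeping.

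For (1), I would observe that any subshift is nonempty by definition, so fix some $\omega\in X$; since $0^\infty\le\omega$ coordinate-wise, heredity forces $0^\infty\in X$. For (2), the point $0^\infty$ is fixed by $\sigma$ (indeed $\sigma(0^\infty)=0^\infty$), so the Dirac measure $\delta_{0^\infty}$ is trivially $\sigma$-invariant, and by (1) it is supported on $X$; this gives an invariant Borel probability measure for $(X,\sigma|_X)$.

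For (3), the key point is that heredity acts ``letter-wise'' on words: if $w=w_1\ldots w_k\in\lang_k(X)$, then every binary word $v=v_1\ldots v_k$ with $v_i\le w_i$ for all $i$ also lies in $\lang_k(X)$. To see this, pick $x\in X$ and a position $t$ at which $w$ appears in $x$; define $x'$ by setting $x'_{t+j-1}=v_j$ for $j=1,\ldots,k$ and $x'_m=0$ for all other coordinates $m$. Then $x'\le x$ coordinate-wise (on the block because $v_j\le w_j$, elsewhere because $0\le x_m$), so $x'\in X$ by heredity, and $v$ appears in $x'$, hence $v\in\lang_k(X)$. Now the set of binary words $v$ with $v_i\le w_i$ is in bijection with the subsets of $\{1\le i\le k: w_i\neq 0\}$ (choose which of those coordinates to make $1$), giving exactly $2^{\#\{1\le i\le k:\,w_i\neq0\}}$ distinct words, all in $\lang_k(X)$; this yields the claimed inequality.

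For (4), the ``only if'' direction is immediate since $\ones(\omega)\subseteq\{n:\omega_n\neq0\}$, so any density lower bound for $\ones(\omega)$ is inherited by the larger set (for the same $\omega$). For ``if'', suppose $\omega\in X$ with $D=\{n:\omega_n\neq0\}$ having positive upper Banach density (the upper-density and asymptotic-density cases are identical). Define $\omega'\in\Omega_n$ by $\omega'_n=1$ if $\omega_n\neq0$ and $\omega'_n=0$ otherwise; then $\omega'\le\omega$ coordinate-wise, so $\omega'\in X$ by heredity, and $\ones(\omega')=D$, which has the required density. The only mild subtlety — and the one place I would be slightly careful — is the asymptotic-density clause in (4): one must note that replacing $\omega$ by $\omega'$ does not change the set $D$ at all, so if the defining limsup was actually a limit for $D$, it remains a limit; thus no genuine obstacle arises. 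Overall the lemma is routine, with the ``letter-wise'' heredity observation in (3) being the single idea doing the work.
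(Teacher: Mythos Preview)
Your proof is correct and is exactly the direct verification the paper has in mind: the paper does not spell out a proof at all, merely remarking that the lemma ``follows directly from the definition of a hereditary subshift,'' and your argument is precisely that unpacking. In particular, your observation in (3) that heredity passes to words via the coordinate-wise construction $x'\le x$, and in (4) that replacing each nonzero symbol by $1$ yields $\omega'\le\omega$, are the intended one-line justifications.
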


Next result shows that the existence of a point with positive upper Banach density of the
occurrences of $1$'s is sufficient for a hereditary shift to have positive topological entropy.

\begin{lemma}\label{lem:positive-ubd-positive-entropy}
If $X\subset \Omega_n$ is a hereditary subshift and there exists $\omega\in X$ such that the set
$\ones (\omega)=\{n\in\mathbb{N}:\omega_n=1\}$ have positive upper Banach density, then $h(X)>0$.
\end{lemma}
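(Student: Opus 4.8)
The plan is to turn a single long window of $\omega$ that contains many $1$'s into exponentially many words of that length, using only the heredity estimate of Lemma~\ref{lemma:basic}\eqref{cond:basic2} together with the fact that $h(X)=\lim_{k\to\infty}\log\lambda_k/k$ is a genuine limit (equal to the infimum of $\log\lambda_k/k$). First I would set $\delta=\ubd(\ones(\omega))>0$ and unwind the definition of upper Banach density: since the $\limsup$ defining $\ubd$ ranges over intervals whose length tends to $\infty$, for the threshold $\eps=\delta/2$ one may select integers $k_1<k_2<\cdots$ and base points $m_1,m_2,\ldots\in\mathbb{N}$ such that the interval $I_j=\{m_j,m_j+1,\ldots,m_j+k_j-1\}$ satisfies $\#(\ones(\omega)\cap I_j)\ge(\delta/2)\,k_j$ for every $j$.

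Next I would take, for each $j$, the word $w^{(j)}$ of length $k_j$ appearing in $\omega$ at position $m_j$; since $\omega\in X$ we have $w^{(j)}\in\lang_{k_j}(X)$. The coordinates of $w^{(j)}$ equal to $1$ are exactly those indexed, after the obvious translation, by $\ones(\omega)\cap I_j$, so
\[
\#\{1\le i\le k_j: w^{(j)}_i\neq 0\}\ \ge\ \#\{1\le i\le k_j: w^{(j)}_i=1\}\ =\ \#\bigl(\ones(\omega)\cap I_j\bigr)\ \ge\ \tfrac{\delta}{2}\,k_j.
\]
Plugging $w^{(j)}$ into Lemma~\ref{lemma:basic}\eqref{cond:basic2} then yields $\lambda_{k_j}=\#\lang_{k_j}(X)\ge 2^{(\delta/2)k_j}$, hence $\log\lambda_{k_j}/k_j\ge\delta/2$ for all $j$. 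Since $k\mapsto\log\lambda_k$ is subadditive, the sequence $\log\lambda_k/k$ converges to $h(X)$, so its limit equals that of the subsequence $(\log\lambda_{k_j}/k_j)_j$, which is $\ge\delta/2$; therefore $h(X)\ge\delta/2>0$.

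I do not expect a real obstacle here; the one point worth a moment's care is the last step. Positive upper Banach density only supplies $1$-dense windows of \emph{some} lengths $k_j\to\infty$, not of every sufficiently large length, so a priori the exponential lower bound on $\lambda_k$ holds only along a subsequence; it is precisely the convergence of $\log\lambda_k/k$ --- recorded earlier from subadditivity --- that upgrades this subsequential bound to a genuine positive lower bound for $h(X)$. In particular the argument uses no ergodic theory, only the combinatorics of Lemma~\ref{lemma:basic}.
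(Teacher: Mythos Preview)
Your proof is correct and follows essentially the same route as the paper's: extract from the positive upper Banach density a sequence of words $w^{(j)}\in\lang_{k_j}(X)$ with a fixed positive proportion of $1$'s, invoke Lemma~\ref{lemma:basic}\eqref{cond:basic2} to get $\log\lambda_{k_j}/k_j$ bounded below by that proportion, and then use that $\log\lambda_k/k$ actually converges (by subadditivity) to pass from the subsequence estimate to $h(X)>0$. Your write-up is in fact slightly more careful than the paper's in distinguishing $\#\{i:w^{(j)}_i=1\}$ from $\#\{i:w^{(j)}_i\neq 0\}$ and in flagging why the convergence of $\log\lambda_k/k$ is needed.
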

\begin{proof}
By our assumption we can find $\eps>0$ and a sequence $w^{(k)}$ of words appearing in $\omega$ such that
$l(k)=|w^{(k)}|\to\infty$ with $k\to\infty$, and $\sum w^{(k)}\ge l(k)\eps$. By Lemma \ref{lemma:basic}\eqref{cond:basic2} we
have $l(k)\eps \le \log \# \lang_{l(k)}(X)$ for all $k>0$. It follows that
\[
h(X) = \lim_{n\to \infty}\frac{\log \# \lang_{n}(X)}{n}=\lim_{k\to \infty}\frac{\log \# \lang_{l(k)}(X)}{l(k)}\ge\eps,
\]
which concludes the proof.
\end{proof}

We will need the following simple combinatorial result whose proof can be found for example in \cite[p. 52]{Shields}.

\begin{lemma}\label{lemma:combinatorial}
Let $0<\eps\le 1/2$ and $n\ge 1$. Then
\[
\sum_{j=0}^{\lfloor n \eps \rfloor}{n \choose j}\le 2^{n\cdot H(\eps)},
\]
where $H(\eps)=-\eps\log \eps - (1-\eps)\log (1-\eps)$.
\end{lemma}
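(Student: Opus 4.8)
The plan is to give the classical ``binomial trick'' proof. Since $0<\eps\le 1/2$ we have $0<\eps<1$, so $H(\eps)$ is well defined, and writing all logarithms to base $2$ as elsewhere in the paper,
\[
2^{-nH(\eps)}=\eps^{n\eps}(1-\eps)^{n(1-\eps)}.
\]
First I would multiply the left-hand side of the claimed inequality by this quantity and compare the resulting sum, term by term, with the full binomial expansion $(\eps+(1-\eps))^n=1$.

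The key observation is that for every index $j$ with $0\le j\le \lfloor n\eps\rfloor$ one has $n\eps-j\ge 0$, while $\eps/(1-\eps)\le 1$ because $\eps\le 1/2$; hence $\bigl(\eps/(1-\eps)\bigr)^{\,n\eps-j}\le 1$, which after multiplying by $\eps^{j}(1-\eps)^{n-j}$ rearranges to
\[
\eps^{n\eps}(1-\eps)^{n(1-\eps)}\le \eps^{j}(1-\eps)^{n-j}.
\]
Multiplying this by ${n\choose j}$ and summing over $j=0,\ldots,\lfloor n\eps\rfloor$ yields
\[
2^{-nH(\eps)}\sum_{j=0}^{\lfloor n\eps\rfloor}{n\choose j}\le \sum_{j=0}^{\lfloor n\eps\rfloor}{n\choose j}\eps^{j}(1-\eps)^{n-j}\le \sum_{j=0}^{n}{n\choose j}\eps^{j}(1-\eps)^{n-j}=1,
\]
and multiplying through by $2^{nH(\eps)}$ gives the assertion.

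There is essentially no obstacle here. The only step that needs attention is the monotonicity inequality above, which uses precisely the hypothesis $\eps\le 1/2$: for $\eps>1/2$ the base $\eps/(1-\eps)$ exceeds $1$ and the inequality would reverse. The degenerate cases are already covered by the same computation: if $n\eps<1$ the sum on the left is the single term ${n\choose 0}=1\le 2^{nH(\eps)}$, and if $\eps=1/2$ then $H(\eps)=1$ and the bound reduces to the trivial $\sum_{j\le n/2}{n\choose j}\le 2^{n}$. Since the statement is quoted from \cite[p.~52]{Shields}, one could alternatively just cite it, but the three-line argument above is short enough to reproduce.
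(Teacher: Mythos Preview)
Your argument is correct and is essentially the same as the paper's: both establish the termwise bound $2^{-nH(\eps)}\le \eps^{j}(1-\eps)^{n-j}$ for every $j\le\lfloor n\eps\rfloor$ and then dominate by the full binomial expansion $(\eps+(1-\eps))^n=1$. The only cosmetic difference is that the paper derives that termwise bound additively via $\log\eps-\log(1-\eps)<0$, whereas you phrase the same inequality multiplicatively as $\bigl(\eps/(1-\eps)\bigr)^{n\eps-j}\le 1$.
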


Let $X\subset\Omega_n$ be a subshift of the full shift over $\Lambda_n$.
For a symbol $\alpha\in\Lambda_n$ we define $\dind_k(X,\alpha)$ as the
maximal number of occurrences of the symbol $\alpha$ in a word $w\in\lang_k(X)$,
that is,
\[
\dind_k(X,\alpha)= \max\left\{\#\{1\le j \le k:w_i=\alpha\}\,:\,w\in\lang_k(X)\right\}.
\]
Clearly, $\dind_{s+t}(X,\alpha)\le \dind_s(X,\alpha)+\dind_t(X,\alpha)$ holds for any positive integers $s$ and $t$.
Therefore, the sequence $\dind_k(X,\alpha)$ is subadditive, and $\dind_k(X,\alpha)/k$ has a limit as $k$ approaches infinity.
Hence we can define \emph{maximal density of $\alpha$ in $X$} as
\[
\adsymb{\alpha}(X)=\lim_{n\to\infty}\frac{\dind_n(X,\alpha)}{n}=\inf_{n\ge 1}\frac{\dind_n(X,\alpha)}{n}.
\]
Theorem \ref{thm:density} is the best motivation for the above definition. Note that for a hereditary shift $X\subset\Omega_n$
we have
\[
\adsymb{n-1}(X)\le \adsymb{n-2}(X)\le\ldots\le \adsymb{1}(X)\le \adsymb{0}(X)=1.
\]

The following lemma follows from the ergodic theorem, but here we present a direct proof inspired by \cite{HLY}.

\begin{theorem}\label{thm:density}
If $X\subset\Omega_n$ is a subshift, then for every $\alpha\in\Lambda_n$ there exists a point $\omega\in X$
such that
\[
\ad(\{j:\omega_j=\alpha\})=\adsymb{\alpha}(X).
\]
\end{theorem}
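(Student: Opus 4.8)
The plan is to construct the desired point $\omega$ as a limit of longer and longer words that realize, or nearly realize, the maximal density $\adsymb{\alpha}(X)$, pieced together so that along one fixed sequence of window lengths the density of $\alpha$ stays close to $\adsymb{\alpha}(X)$ from below, while an averaging/subadditivity argument forces it not to exceed $\adsymb{\alpha}(X)$ either, yielding an honest limit equal to $\adsymb{\alpha}(X)$. Write $\Delta=\adsymb{\alpha}(X)$ and, for each $k$, pick $w^{(k)}\in\lang_k(X)$ with $\#\{1\le j\le k:w^{(k)}_j=\alpha\}=\dind_k(X,\alpha)$, so that $\dind_k(X,\alpha)/k\to\Delta$ and $\dind_k(X,\alpha)/k\ge\Delta$ for every $k$.

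First I would reduce to producing, for each $\eps>0$, a point $\omega$ with $\ud(\{j:\omega_j=\alpha\})\ge\Delta-\eps$ and then upgrade this. The upper bound $\ud(\{j:\omega_j=\alpha\})\le\Delta$ is automatic for \emph{every} $\omega\in X$: any length-$m$ window of $\omega$ is a word in $\lang_m(X)$, hence contains at most $\dind_m(X,\alpha)\le m\Delta$ occurrences of $\alpha$ (using $\dind_m(X,\alpha)/m\ge\Delta=\inf$, wait — here one uses $\dind_m(X,\alpha)\le$ its own value and $\dind_m/m\to\Delta$ from above along a subsequence, so in fact $\#\{\alpha\text{ in window}\}/m\le \dind_m(X,\alpha)/m$ and $\limsup_m \dind_m(X,\alpha)/m=\Delta$), so $\ud(\{j:\omega_j=\alpha\})\le\Delta$. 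Thus the whole problem is to get the liminf up to $\Delta$, i.e.\ to exhibit a single $\omega$ whose density of $\alpha$ equals $\Delta$.

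For the construction, following the idea in \cite{HLY}: build $\omega$ as a concatenation $\omega=w^{(k_1)}w^{(k_2)}w^{(k_3)}\cdots$ where the block lengths $k_i$ grow so fast that each block dwarfs the sum of all preceding ones (e.g.\ $k_{i+1}>2^i(k_1+\cdots+k_i)$ and $k_i\to\infty$ with $\dind_{k_i}(X,\alpha)/k_i\to\Delta$). The concatenation of admissible words need not be admissible in a general subshift, so the first technical point is to insert a bounded-length "transition word" or, better, to observe that since $X$ is a subshift one can instead take $\omega$ to be a limit point in $X$ of the sequence of one-sided points $(w^{(k_1)}w^{(k_2)}\cdots w^{(k_N)}u_N)$ for suitable tails $u_N\in X$ — compactness of $X$ gives a convergent subsequence whose limit $\omega\in X$ agrees with the prescribed prefix on longer and longer initial segments. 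Then, evaluating the frequency of $\alpha$ along the window lengths $L_i=k_1+\cdots+k_i$: the number of $\alpha$'s in the first $L_i$ symbols is at least $\dind_{k_i}(X,\alpha)\ge k_i(\Delta-\eps_i)$ with $\eps_i\to 0$, and $k_i/L_i\to 1$ by the fast-growth condition, so $\#\{j\le L_i:\omega_j=\alpha\}/L_i\to\Delta$. Combined with the universal upper bound $\ud\le\Delta$, this gives $\ad(\{j:\omega_j=\alpha\})=\lim = \Delta$, as required.

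The main obstacle is the gluing issue: in an arbitrary subshift the concatenation of two admissible words is typically not admissible, so one cannot literally form $w^{(k_1)}w^{(k_2)}\cdots$. I expect to handle this exactly as sketched — realize $\omega$ as a limit point in the (compact) space $X$ of legitimate points that share ever-longer prefixes with the target concatenation, so that admissibility is automatic and only the \emph{prefix} needs to be controlled; the density computation only ever uses initial-segment counts, which are determined by the prefix, so no control over the glued-in tails is needed. A secondary, purely bookkeeping point is to make the growth condition on $(k_i)$ precise enough that $L_{i-1}/L_i\to 0$, which is what kills the contribution (and the possible defect) of all earlier blocks. Lemma \ref{lemma:combinatorial} is not needed here — it is auxiliary for other parts of the paper — so the proof stays elementary, using only subadditivity of $\dind_k(X,\alpha)$ and compactness of $X$.
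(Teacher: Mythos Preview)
There is a genuine gap in the gluing step. You correctly identify that in an arbitrary subshift the concatenation $w^{(k_1)}w^{(k_2)}\cdots w^{(k_N)}$ need not lie in $\lang(X)$, but your proposed fix does not actually resolve this. Taking a limit point in $X$ of points of the form $(w^{(k_1)}w^{(k_2)}\cdots w^{(k_N)}u_N)$ presupposes that such points exist in $X$, which happens precisely when the concatenated prefix is admissible --- the very thing you acknowledge may fail. Compactness of $X$ gives you limits of sequences \emph{in} $X$, not of sequences in the ambient full shift. Nor can one generally insert ``bounded-length transition words'': that is the specification property, which a generic subshift lacks. Even if you drop the concatenation idea and instead take a limit of points $x^{(i)}\in[w^{(k_i)}]_X$ individually, the limit need not have the right density: the prefixes of a word with $\alpha$-density near $\Delta$ can have $\alpha$-density near $0$ (think of words of the form $0^{k/2}\alpha^{k/2}$), so the limit along fast-growing $k_i$ could simply be $0^\infty$.

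The paper circumvents this entirely by proving a different, and sharper, intermediate claim: for each $k$ there is a single word $w^{(k)}\in\lang(X)$ such that \emph{every} initial segment of length $j\le k$ already has $\alpha$-density at least $\Delta-1/k$. This is obtained not by gluing but by a pigeonhole/contradiction argument applied to a point $\bar x^{(m)}$ whose initial $m$-block realizes $\dind_m(X,\alpha)$ for $m=k^2+1$: if no such $w^{(k)}$ existed, one could greedily cut $\bar x^{(m)}$ into blocks of length at most $k$, each of $\alpha$-density below $\Delta-1/k$, forcing the total count of $\alpha$'s in the first $m$ symbols below $m\Delta$, a contradiction. Once you have such $w^{(k)}$, taking a limit of points in $[w^{(k)}]_X$ is legitimate, and the uniform lower bound on all prefixes survives in the limit --- this is exactly the control your concatenation scheme cannot supply. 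Your upper bound $\ud\le\Delta$ is correct (though the exposition wobbles): $\#\{j\le m:\omega_j=\alpha\}\le\dind_m(X,\alpha)$ for every $m$, and $\dind_m(X,\alpha)/m\to\Delta$.
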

\begin{proof}Without loss of generality we may assume that $n=2$ and $\alpha=1$.
If $\adone(X)=0$, then the set $\mathbb{N}\setminus\ones(\omega)$ must be thick for every $\omega\in X$.
Then $0^\infty\in X$ since $X$ is closed and shift invariant. We assume that $\adone(X)>0$.
For every $n>0$ let $\bar{w}^{(n)}=\bar{w}^{(n)}_1\ldots \bar{w}^{(n)}_n\in\lang_n(X)$ be a word of length $n$ such that
\[
\sum_{i=1}^n\bar{w}^{(n)}_i=\dind_n(X,1)= \max\left\{\sum_{i=1}^n w_i:w=w_1\ldots w_n\in\lang_n(X)\right\},
\]
and fix any point $\bar{x}^{(n)}\in[\bar{w}^{(n)}]_X$.

We claim that for each integer $k>0$ there exists a word $w^{(k)}\in\lang(X)$ such that
\begin{equation}\label{eq:star}
\adone(X)-\frac{1}{k}\le \frac{1}{j}\sum_{i=0}^{j-1} w^{(k)}_i \quad\text{for}\,1\le j \le k.
\end{equation}
For the proof of the claim, assume on contrary that \eqref{eq:star} do not hold for some $k>0$.
Then, $\adone(X)-1/k>0$. Set $m=k^2+1$. As we assumed that our claim fails,
for a point $y=\bar{x}^{(m)}$ defined above we can find a strictly increasing sequence
of integers $\{l(s)\}_{s=0}^\infty$ such that $l(0)=0$, $l({j})-l(j-1)\le k$, and
\[
\frac{1}{l({j})-l(j-1)}\sum_{i=l(j-1)}^{l({j})-1} y_i < \adone(X)-\frac{1}{k},
\]
for every $j=1,2,\ldots$. Let $t>0$ be such that $ l(t) \le m < l(t+1)$. Then
\begin{align*}
m\adone(X)  &\le \dind_m(X) = \sum_{i=1}^m \bar{w}^{(m)}_i=\sum_{j=0}^{t}\sum_{i=l(j-1)}^{l({j})-1}(l({j})-l(j-1)) y_i + \sum_{i=l(t)}^{m}y_i\\
            & < m(\adone(X)-\frac{1}{k})+k,
\end{align*}
contradicting the definition of $m$. Therefore, our claim holds.

Now, for each integer $k>0$ there exists a point $x^{(k)} \in[w^{(k)}]_X$, and since $X$ is compact, we may without loss
of generality assume that
$x^{(k)}$ converge to some $x\in X$. Hence for every $k>0$ there exists $N\ge k$ such that
$x|_{[0,k)}=w^{(N)}|_{[0,k)}$.  For every $k>0$ we have
\[
\adone(X)-\frac{1}{N}\le \frac{1}{k}\sum_{i=0}^{k-1} x_i \le \frac{\dind_k(X,1)}{k},
\]
where the first inequality follows by our claim, and the second is a consequence of the definition of $\dind_k(X,1)$.
We conclude the proof by passing to the limit as $k\to \infty$.
\end{proof}


It is clear that if there exists $\omega\in X$ such that $\ones (\omega)$ have positive upper Banach density, then $\adone(X)$
is also positive. Let us note an immediate consequence:

\begin{corollary}\label{cor:ad-ones}
If $X\subset\Omega_n$ is a subshift and $\ubd(\ones(x))>0$ for some $x\in X$, then there exits $y\in X$ such that $\ad(\ones(y))>0$.
\end{corollary}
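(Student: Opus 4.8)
The plan is to read this off from Theorem \ref{thm:density}. That theorem, applied with $\alpha=1$, yields a point $y\in X$ with $\ad(\ones(y))=\adone(X)$; so all that needs checking is that the hypothesis $\ubd(\ones(x))>0$ forces $\adone(X)>0$.

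First I would fix $x\in X$ with $\ubd(\ones(x))=c>0$ and choose $\eps$ with $0<\eps<c$. By the definition of the upper Banach density there are pairs $m_j<n_j$ with $n_j-m_j\to\infty$ and
\[
\frac{\#\bigl(\ones(x)\cap\{m_j,m_j+1,\ldots,n_j-1\}\bigr)}{n_j-m_j}>\eps\qquad\text{for all }j.
\]
Write $\ell_j=n_j-m_j$ and let $w^{(j)}=x_{m_j}x_{m_j+1}\cdots x_{n_j-1}\in\lang_{\ell_j}(X)$ be the corresponding subword of $x$. Then $w^{(j)}$ contains more than $\eps\ell_j$ occurrences of the symbol $1$, so by the definition of $\dind_{\ell_j}(X,1)$ we get $\dind_{\ell_j}(X,1)/\ell_j>\eps$ for every $j$.

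Finally I would invoke the fact, recorded just before the definition of $\adsymb{\alpha}(X)$, that $\dind_k(X,1)$ is subadditive, so that $\dind_k(X,1)/k$ converges (to $\adone(X)=\inf_{k\ge 1}\dind_k(X,1)/k$). Since $\ell_j\to\infty$, passing to the limit along the subsequence $(\ell_j)$ in the inequality $\dind_{\ell_j}(X,1)/\ell_j>\eps$ gives $\adone(X)\ge\eps>0$, and Theorem \ref{thm:density} then supplies the required point $y$. There is essentially no obstacle here: the only thing to be careful about is that $\adone(X)$ is the honest limit (equivalently, the infimum) of $\dind_k(X,1)/k$, so that a lower bound valid merely along an unbounded sequence of lengths already pins it down — everything else is bookkeeping with the definitions.
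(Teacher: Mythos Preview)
Your proof is correct and follows exactly the route the paper takes: the paper simply remarks that positive upper Banach density of $\ones(\omega)$ for some $\omega\in X$ forces $\adone(X)>0$, and then Theorem~\ref{thm:density} supplies the point $y$. You have merely spelled out the ``it is clear'' step in detail, and your caution about using that $\dind_k(X,1)/k$ genuinely converges (so a subsequential lower bound suffices) is exactly the right justification.
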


We can now use the previous theorem and its corollary to provide a proof of \cite[Corollary to thm. 3.20]{F}.
\begin{theorem}\label{thm:F}
If $A\subset\mathbb{N}$ is a set of positive upper Banach density, then
there is a set $B\subset\mathbb{N}$ with positive density such that $B-B$ is contained in $A-A$.
\end{theorem}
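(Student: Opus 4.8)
\emph{Proof proposal.} The plan is to obtain the set $B$ as the support of the $1$'s of a suitable point in the spacing shift determined by the difference set $A-A$, and then to invoke Corollary~\ref{cor:ad-ones}.

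First I would put $P = A - A$ and let $\chi_A \in \Omega_2$ be the characteristic sequence of $A$, so that $\ones(\chi_A) = A$. The first (and essentially only) observation is that $\chi_A \in \Omega_P$: indeed, if $(\chi_A)_i = (\chi_A)_j = 1$ with $i \neq j$, then $i, j \in A$, whence $|i - j| \in A - A = P$, which is precisely the admissibility condition defining the spacing shift $\Omega_P$. Since $A$ has positive upper Banach density, this exhibits a point $\chi_A \in \Omega_P$ with $\ubd(\ones(\chi_A)) = \ubd(A) > 0$. (Note that $\Omega_P$ is nonempty and is a genuine hereditary subshift, as $P \neq \emptyset$ because $A$, having positive upper Banach density, is infinite.)

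Next I would apply Corollary~\ref{cor:ad-ones} to the subshift $X = \Omega_P$ and the point $\chi_A$: there exists $y \in \Omega_P$ with $\ad(\ones(y)) > 0$. Set $B = \ones(y)$. Then $B$ has positive asymptotic density by construction, and it remains to check $B - B \subseteq A - A$. If $k, l \in B$ with $k > l$, then $y_k = y_l = 1$, and since $y \in \Omega_P$ the distance $k - l$ must lie in $P$; equivalently, $B - B \subseteq N([1]_P, [1]_P) = P = A - A$. This $B$ satisfies the conclusion.

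I do not expect a genuine obstacle: all the work is hidden in Corollary~\ref{cor:ad-ones}, itself a consequence of Theorem~\ref{thm:density}, which upgrades a point with positive upper Banach density of $1$'s to a point with positive \emph{asymptotic} density of $1$'s. The only points requiring a word of care are the trivial verification that $\Omega_P$ is a nonempty subshift and the matching of ``positive density'' in the statement with the asymptotic density $\ad$ produced by Theorem~\ref{thm:density}. It is worth remarking that this argument is the topological-dynamical shadow of Furstenberg's ergodic proof: Theorem~\ref{thm:density} plays here the role that the pointwise ergodic theorem (applied to an invariant measure on $\Omega_P$ with $\mu([1]_P)$ as large as possible) plays there.
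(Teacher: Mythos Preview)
Your proof is correct and follows exactly the same route as the paper's: set $P=A-A$, observe that the characteristic function of $A$ lies in $\Omega_P$ with positive upper Banach density of $1$'s, apply Corollary~\ref{cor:ad-ones} to obtain a point whose set of $1$'s has positive asymptotic density, and take $B$ to be that set. The paper's version is simply more terse, omitting the explicit verifications you spell out.
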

\begin{proof} Let $P=A-A$. Then the characteristic function of $A$ denoted by $\omega_A$ belongs to the spacing shift $\Omega_P$.
By the Corollary~\ref{cor:ad-ones} there is a point $\omega\in \Omega_P$ with $\ad(\ones(\omega))>0$. Let $B\subset\mathbb{N}$ be such that
$\omega$ is its characteristic function. Then $\ad(B)>0$ and $B-B\subset P=A-A$.
\end{proof}

Let us note here yet another application of spacing shifts to combinatorial number theory. It follows directly from
Theorem~\ref{thm:minimality}.

\begin{lemma}
If $Z\subset\mathbb{N}$ is a piecewise syndetic set, then there is a syndetic set $S\subset\mathbb{N}$ such that $S-S\subset Z-Z$.
\end{lemma}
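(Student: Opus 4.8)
The plan is to realise a translate of $Z$ as a set of visiting times inside the spacing shift $\Omega_P$ with $P = Z-Z$, and then extract from the cylinder $[1]_P$ a minimal point whose set of $1$'s does the job, using both halves of Theorem~\ref{thm:minimality}.

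First I would put $P = Z - Z$ and let $\omega_Z \in \Omega_2$ denote the characteristic function of $Z$. Since the distances between arbitrary $1$'s of $\omega_Z$ are exactly the elements of $Z - Z = P$, the definition of a spacing shift gives $\omega_Z \in \Omega_P$. Now work inside the dynamical system $(\Omega_P, \sigma_P)$. The set $U = [1]_P$ is nonempty (it contains $10^\infty$) and open, and, because $\Omega_P$ is shift invariant,
\[
N(\omega_Z, U) = \{n > 0 : (\sigma_P^{\,n}\omega_Z)_1 = 1\} = \{n > 0 : (\omega_Z)_{n+1} = 1\} = (Z - 1) \cap \mathbb{N}.
\]
Since piecewise syndeticity is preserved by translation, $N(\omega_Z, U)$ is piecewise syndetic.

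Next I would invoke the second assertion of Theorem~\ref{thm:minimality}: as $N(\omega_Z, U)$ is piecewise syndetic for the point $\omega_Z$, the open set $U = [1]_P$ must contain a minimal point $y \in \Omega_P$. Set $S = \ones(y) = \{n \in \mathbb{N} : y_n = 1\}$. Two facts now finish the argument. On the one hand $y \in \Omega_P$, so the distances between the $1$'s of $y$ lie in $P$, that is, $S - S \subset P = Z - Z$. On the other hand $y \in [1]_P$ forces $y_1 = 1$, so $S = \{1\} \cup (N(y, U) + 1)$; since $y$ is minimal, the first assertion of Theorem~\ref{thm:minimality} shows that $N(y, U)$ is syndetic, and a translate of a syndetic set, enlarged by one point, is again syndetic. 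Hence $S$ is syndetic, and it is the required set.

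I do not expect a genuine obstacle here: the only real idea is to take the ambient system to be the spacing shift $\Omega_P$ with $P = Z - Z$, for this is exactly what makes minimality of $y$ supply at once the syndeticity of $S$ (via the visiting-time characterisation of minimal points) and the inclusion $S - S \subset Z - Z$ (via $y \in \Omega_P$). What remains is the routine bookkeeping of the constant shift between $N(y, [1]_P)$ and $\ones(y)$ together with the trivial check that $[1]_P \neq \emptyset$.
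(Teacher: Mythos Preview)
Your argument is correct and is precisely the proof the paper has in mind: the lemma is introduced as ``yet another application of spacing shifts'' and is said to follow directly from Theorem~\ref{thm:minimality}, which is exactly your route via $\Omega_{Z-Z}$ and the minimal point in $[1]_P$.
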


In the case of a binary subshift, we prove that $\adone(X)>0$ is necessary for $h(X)>0$.
\begin{theorem}\label{thm:zero-density-zero-entropy}
Let $X\subset \Omega_n$ be a subshift. If the maximal density of $\alpha$ in $X$ is zero ($\adsymb{\alpha}(X)=0$) for every $\alpha\in\Lambda_n\setminus\{0\}$, then $h(X)=0$.
\end{theorem}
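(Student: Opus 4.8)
The plan is to bound $\#\lang_k(X)$ by the number of length-$k$ words having few nonzero coordinates, and then to combine this with the combinatorial estimate of Lemma~\ref{lemma:combinatorial} and the hypothesis $\adsymb{\alpha}(X)=0$, which forces the number of nonzero coordinates to grow sublinearly in $k$.

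First I would fix $\eps\in(0,1/2]$. For any word $w=w_1\ldots w_k\in\lang_k(X)$, the number of indices $i$ with $w_i\neq 0$ is at most $\sum_{\alpha=1}^{n-1}\dind_k(X,\alpha)$, since $\#\{1\le i\le k:w_i=\alpha\}\le\dind_k(X,\alpha)$ for each $\alpha$. By hypothesis $\dind_k(X,\alpha)/k\to\adsymb{\alpha}(X)=0$ as $k\to\infty$ for every $\alpha\in\{1,\ldots,n-1\}$, and there are only finitely many such $\alpha$, so there is $K\in\mathbb{N}$ with $\dind_k(X,\alpha)\le \eps k/(n-1)$ for all $k\ge K$ and all $\alpha\in\{1,\ldots,n-1\}$ simultaneously. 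Consequently every $w\in\lang_k(X)$ with $k\ge K$ has at most $\lfloor\eps k\rfloor$ nonzero coordinates.

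Next I would count such words directly: a length-$k$ word with at most $\lfloor\eps k\rfloor$ nonzero coordinates is determined by the choice of the nonzero positions and of the symbols placed there, so for $k\ge K$
\[
\#\lang_k(X)\le \sum_{j=0}^{\lfloor\eps k\rfloor}\binom{k}{j}(n-1)^j\le (n-1)^{\eps k}\sum_{j=0}^{\lfloor\eps k\rfloor}\binom{k}{j}\le (n-1)^{\eps k}\,2^{k\,H(\eps)},
\]
where the last inequality is Lemma~\ref{lemma:combinatorial} (and $(n-1)^j\le(n-1)^{\eps k}$ since $n-1\ge 1$). Taking logarithms, dividing by $k$, and letting $k\to\infty$ gives $h(X)\le \eps\log(n-1)+H(\eps)$.

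Finally, since $\eps\in(0,1/2]$ was arbitrary and $H(\eps)\to 0$ as $\eps\to 0^+$, letting $\eps\to 0^+$ yields $h(X)=0$. I do not anticipate a genuine obstacle; the only point needing slight care is that the bound $\dind_k(X,\alpha)\le\eps k/(n-1)$ must hold for a single length $k$ uniformly over all nonzero symbols $\alpha$, which is immediate because $\Lambda_n$ is finite.
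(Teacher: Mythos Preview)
Your proof is correct and follows essentially the same approach as the paper: bound $\#\lang_k(X)$ by the number of length-$k$ words with at most $\lfloor\eps k\rfloor$ nonzero entries, apply Lemma~\ref{lemma:combinatorial}, and let $\eps\to 0$. The only difference is cosmetic: the paper reduces to the binary case by a ``without loss of generality'' at the outset, whereas you work directly with general $n$ and carry the extra factor $(n-1)^{\eps k}$, which is arguably cleaner.
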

\begin{proof}Without loss of generality we may assume that $n=2$ and $\alpha=1$. Fix $0<\eps<1/2$. As
\[
0=\adone(X)=\lim_{n\to\infty}\frac{\dind_n(X,1)}{n},
\]
there exists an $N=N(\eps)>0$ such that for each $n\ge N$ we have
\[
\dind_n(X,1)=\max\left\{\sum_{i=1}^n w_i:w=w_1\ldots w_n\in\lang_n(X)\right\}\le \lfloor n\eps\rfloor.
\]
It implies that
\[
\#\lang_n (X) \le \sum_{j=0}^{\lfloor n \eps \rfloor}{n \choose j} \qquad\text{for every }n\ge N.
\]
By Lemma \ref{lemma:combinatorial}, we get
\[
h(X)=\inf\frac{\#\log\lang_n (X)}{n}\le H(\eps),
\]
where $H(\eps)=-\eps\log \eps - (1-\eps)\log (1-\eps)$.
To finish the proof observe that $H(\eps)\to 0$ as $\eps\to 0$.
\end{proof}

Clearly, Theorems \ref{thm:density} and \ref{thm:zero-density-zero-entropy} imply:

\begin{corollary}
If $X\subset\Omega_n$ is a subshift and $h(X)>0$, then there is $\omega\in X$ such that 
$\ad(\{n:\omega_n=\alpha\})>0$ for some $\alpha\in\Lambda_n\setminus\{0\}$.
\end{corollary}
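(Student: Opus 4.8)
The plan is to derive this immediately from the two theorems just proved, reading \autoref{thm:zero-density-zero-entropy} in its contrapositive form and then invoking \autoref{thm:density} to pass from a statement about maximal densities to a statement about an actual point of $X$.

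First I would argue by contraposition on \autoref{thm:zero-density-zero-entropy}. That theorem asserts that if $\adsymb{\alpha}(X)=0$ for every $\alpha\in\Lambda_n\setminus\{0\}$, then $h(X)=0$. Hence, assuming $h(X)>0$, it cannot be the case that all the maximal densities $\adsymb{\alpha}(X)$, $\alpha\in\Lambda_n\setminus\{0\}$, vanish; so there exists at least one symbol $\alpha_0\in\Lambda_n\setminus\{0\}$ with $\adsymb{\alpha_0}(X)>0$.

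Next I would feed this symbol $\alpha_0$ into \autoref{thm:density}. That theorem produces, for the chosen symbol, a point $\omega\in X$ with
\[
\ad(\{n:\omega_n=\alpha_0\})=\adsymb{\alpha_0}(X).
\]
Since the right-hand side is strictly positive by the previous step, this $\omega$ witnesses the conclusion of the corollary, namely $\ad(\{n:\omega_n=\alpha_0\})>0$ with $\alpha_0\in\Lambda_n\setminus\{0\}$.

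There is essentially no obstacle here: the entire content has been packaged into \autoref{thm:density} and \autoref{thm:zero-density-zero-entropy}, and the corollary is just their logical composition. The only point worth stating carefully is that \autoref{thm:density} guarantees that the upper density in question is in fact a genuine limit (an asymptotic density), so that positivity of $\adsymb{\alpha_0}(X)$ transfers verbatim to $\ad(\{n:\omega_n=\alpha_0\})$ rather than merely to the upper density.
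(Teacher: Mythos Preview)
Your argument is correct and matches the paper's own reasoning exactly: the corollary is stated immediately after the line ``Clearly, Theorems \ref{thm:density} and \ref{thm:zero-density-zero-entropy} imply,'' and you have spelled out precisely that composition (contrapositive of Theorem~\ref{thm:zero-density-zero-entropy} to obtain $\adsymb{\alpha_0}(X)>0$, then Theorem~\ref{thm:density} to realize this as the asymptotic density along a point).
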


Finally, we state our main theorem characterizing hereditary shifts with positive entropy as the ones with positive
density of occurrences of $1$'s.

\begin{theorem}\label{thm:hereditary-entropy-density}
The topological entropy of a hereditary subshift $X\subset \Omega_n$ is positive
if and only if there exists $\omega\in X$ with $BD^*(\ones(\omega))>0$.
\end{theorem}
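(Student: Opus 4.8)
The plan is to assemble the statement from the results already proved in this section; essentially no new combinatorics is needed, only the heredity reduction of Lemma~\ref{lemma:basic}.

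For the ``if'' direction I would argue directly: if $\omega\in X$ satisfies $\ubd(\ones(\omega))>0$, then, since $X$ is hereditary, Lemma~\ref{lem:positive-ubd-positive-entropy} applies verbatim and gives $h(X)>0$. For the ``only if'' direction, assume $h(X)>0$. By the contrapositive of Theorem~\ref{thm:zero-density-zero-entropy} there is a symbol $\alpha\in\Lambda_n\setminus\{0\}$ with $\adsymb{\alpha}(X)>0$. Theorem~\ref{thm:density} then furnishes a point $\omega\in X$ with
\[
\ad(\{j:\omega_j=\alpha\})=\adsymb{\alpha}(X)>0.
\]
Since the asymptotic density of a set never exceeds its upper Banach density, $\ubd(\{j:\omega_j=\alpha\})>0$, and because $\{j:\omega_j=\alpha\}\subseteq\{j:\omega_j\neq 0\}$ we get $\ubd(\{j:\omega_j\neq 0\})>0$. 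Finally, invoking heredity through Lemma~\ref{lemma:basic}\eqref{cond:basic4}, there is a point $\omega'\in X$ with $\ubd(\ones(\omega'))>0$, as required.

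The only points needing a line of justification are the inequality $\ad(A)\le\ubd(A)$ (immediate from the definitions, since the initial windows $\{1,\dots,n\}$ are among the windows used to compute $\ubd$, so $\ubd(A)\ge\ud(A)\ge\ad(A)$) and the passage from a point realizing positive Banach density of non-zero symbols to one realizing positive Banach density of the symbol $1$, which is exactly the content of Lemma~\ref{lemma:basic}\eqref{cond:basic4}. Thus there is no genuine obstacle here: the substantive work was carried out in Theorems~\ref{thm:density} and~\ref{thm:zero-density-zero-entropy}, and the present theorem is their packaging together with the heredity reduction.
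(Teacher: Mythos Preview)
Your proof is correct and follows essentially the same route as the paper, which also derives sufficiency from Lemma~\ref{lem:positive-ubd-positive-entropy} and necessity from Lemma~\ref{lemma:basic}\eqref{cond:basic4} together with Theorem~\ref{thm:zero-density-zero-entropy}. You are in fact more careful than the paper on one point: the paper's terse citation of Theorem~\ref{thm:zero-density-zero-entropy} for necessity tacitly uses the implication ``$\adsymb{\alpha}(X)>0$ for some $\alpha\neq 0$ implies there is a point with positive upper Banach density of nonzero symbols,'' which is exactly what your explicit appeal to Theorem~\ref{thm:density} supplies.
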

\begin{proof}Necessity of positive
density of occurrences of $1$'s follows from Lemma~\ref{lemma:basic}\eqref{cond:basic4}
and Theorem~\ref{thm:zero-density-zero-entropy},
sufficiency follows from Lemma~\ref{lem:positive-ubd-positive-entropy}.
\end{proof}

As remarked above we might take a different route and obtain an ergodic proof of
Theorem~\ref{thm:hereditary-entropy-density}.
It would hinge upon the Variational Principle for the topological entropy
and the well known result (see \cite[Lemma 3.17]{F}), which is included in
the first part of the following theorem (the equivalence of conditions \ref{one}-\ref{three}).
The other implications follows from Theorems \ref{thm:density} and \ref{thm:hereditary-entropy-density}.

\begin{theorem}\label{thm:ergodic-equivalences}
For a subshift $X\subset\Omega_n$ the following conditions are equivalent:
\begin{enumerate}
\item \label{one}There exists a point $\omega\in X$ such that $\ubd(\{n:\omega_n=\alpha\})>0$ for some $\alpha\in\Lambda_n\setminus\{0\}$.
\item \label{two}There exists a shift invariant measure $\mu$ on $X$ such that $\mu([\alpha]_X)>0$ for some $\alpha\in\Lambda_n\setminus\{0\}$.
\item \label{three}There exists a shift invariant ergodic measure $\mu_e$ on $X$ such that $\mu_e([\alpha]_X)>0$ for some $\alpha\in\Lambda_n\setminus\{0\}$.
\item \label{four}There exists a point $\omega\in X$ such that $\ad(\{n:\omega_n=\alpha\})$ exists and is positive for some $\alpha\in\Lambda_n\setminus\{0\}$.
\end{enumerate}
Moreover, if $X$ has positive topological entropy, then all the above conditions \ref{one}-\ref{four} must hold,
and if $X$ is a hereditary shift, then conditions \ref{one}-\ref{four} and $h(X)>0$ are equivalent.
\end{theorem}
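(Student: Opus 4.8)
The plan is to establish the cyclic chain \ref{one} $\Rightarrow$ \ref{two} $\Rightarrow$ \ref{three} $\Rightarrow$ \ref{four} $\Rightarrow$ \ref{one}, giving the equivalence of \ref{one}--\ref{four} for an arbitrary subshift, and then to read off the two ``moreover'' clauses from results already proved in this section. Each of \ref{one}--\ref{four} asserts the existence of a symbol $\alpha\in\Lambda_n\setminus\{0\}$ with a prescribed property; in each implication I keep the witnessing $\alpha$ fixed and produce the required object for that same $\alpha$.

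For \ref{one} $\Rightarrow$ \ref{two} I would run the Krylov--Bogolyubov construction along intervals witnessing the Banach density. If $\ubd(\{n:\omega_n=\alpha\})=c>0$, choose intervals $[m_k,n_k)$ with $n_k-m_k\to\infty$ and $(n_k-m_k)^{-1}\#\{m_k\le i<n_k:\omega_i=\alpha\}\to c$, set $\mu_k=(n_k-m_k)^{-1}\sum_{i=m_k}^{n_k-1}\delta_{\sigma^i(\omega)}$, and pass to a weak$^*$ accumulation point $\mu$. Since $n_k-m_k\to\infty$, the telescoping estimate $\|\mu_k-\sigma_*\mu_k\|\le 2/(n_k-m_k)\to 0$ forces $\mu$ to be $\sigma$-invariant, and as $[\alpha]_X$ is clopen its indicator is continuous, so $\mu([\alpha]_X)=c>0$; this is precisely \cite[Lemma 3.17]{F}, which one may also just quote. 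The implication \ref{two} $\Rightarrow$ \ref{three} is the ergodic decomposition $\mu=\int\mu_e\,d\tau(\mu_e)$: from $\int\mu_e([\alpha]_X)\,d\tau(\mu_e)=\mu([\alpha]_X)>0$ one gets $\mu_e([\alpha]_X)>0$ on a set of positive $\tau$-measure, and any such ergodic $\mu_e$ works.

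For \ref{three} $\Rightarrow$ \ref{four} apply Birkhoff's pointwise ergodic theorem to the indicator of $[\alpha]_X$: for $\mu_e$-a.e.\ $\omega$ one has $n^{-1}\#\{1\le j\le n:\omega_j=\alpha\}\to\mu_e([\alpha]_X)>0$, so $\ad(\{n:\omega_n=\alpha\})$ exists and is positive; pick one such $\omega\in X$. The implication \ref{four} $\Rightarrow$ \ref{one} is immediate since $\ubd(B)\ge\ad(B)$ whenever $\ad(B)$ exists. This settles the equivalence of \ref{one}--\ref{four}. For the first ``moreover'' clause, combine the contrapositive of Theorem~\ref{thm:zero-density-zero-entropy} ($h(X)>0$ forces $\adsymb{\alpha}(X)>0$ for some $\alpha\neq 0$) with Theorem~\ref{thm:density} (which then supplies $\omega\in X$ with $\ad(\{n:\omega_n=\alpha\})=\adsymb{\alpha}(X)>0$) to obtain \ref{four}, hence \ref{one}--\ref{four}. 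For the hereditary case it remains to prove \ref{one} $\Rightarrow h(X)>0$: given $\omega\in X$ with $\ubd(\{n:\omega_n=\alpha\})>0$ for some $\alpha\neq 0$, the inclusion $\{n:\omega_n=\alpha\}\subseteq\{n:\omega_n\neq 0\}$ gives $\ubd(\{n:\omega_n\neq 0\})>0$, so Lemma~\ref{lemma:basic}\eqref{cond:basic4} yields $\omega'\in X$ with $\ubd(\ones(\omega'))>0$, and Lemma~\ref{lem:positive-ubd-positive-entropy} (or Theorem~\ref{thm:hereditary-entropy-density}) then gives $h(X)>0$.

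I do not expect a genuine obstacle here: the engine is the standard Krylov--Bogolyubov / ergodic-decomposition / Birkhoff package, and the rest is bookkeeping with lemmas already at hand. The two points deserving a little care are the $\sigma$-invariance of the weak$^*$ limit in \ref{one} $\Rightarrow$ \ref{two} when the averaging is taken over the Banach-density windows $[m_k,n_k)$ rather than over $[0,n)$ (which the telescoping bound still handles), and, in the hereditary ``moreover'', routing the argument through the larger set $\{n:\omega_n\neq 0\}$ so that Lemma~\ref{lemma:basic}\eqref{cond:basic4} and Lemma~\ref{lem:positive-ubd-positive-entropy} become applicable.
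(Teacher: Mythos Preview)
Your proposal is correct and essentially matches the paper's own treatment. The paper does not write out a detailed proof of this theorem: it simply attributes the equivalence of \ref{one}--\ref{three} to \cite[Lemma 3.17]{F} and says that the remaining implications follow from Theorems~\ref{thm:density} and~\ref{thm:hereditary-entropy-density}; your Krylov--Bogolyubov / ergodic decomposition / Birkhoff argument is exactly the standard package behind the Furstenberg citation, and your handling of the two ``moreover'' clauses via Theorems~\ref{thm:zero-density-zero-entropy}, \ref{thm:density}, and Lemma~\ref{lem:positive-ubd-positive-entropy} is the same route the paper indicates. The only cosmetic difference is that the paper would obtain \ref{four} from \ref{one} by going through Theorem~\ref{thm:density} (its direct, non-ergodic construction of a point realizing $\adsymb{\alpha}(X)$), whereas you obtain \ref{four} from \ref{three} via Birkhoff; both are fine and equally short.
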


We find it useful to slightly rephrase the previous theorem.

\begin{theorem}\label{thm:ergodic-equivalences2}
Let $X\subset \Omega_n$ be a subshift.
The following conditions are equivalent:
\begin{enumerate}
\item \label{one1}      For every $\alpha\in\Lambda_n\setminus\{0\}$ the cylinder $[\alpha]_X$ is universally null, that is,
                        $\mu([\alpha]_X)=0$ for any shift invariant measure on $X$.
\item \label{two1}      For every $\alpha\in\Lambda_n\setminus\{0\}$ and for every $\omega\in X$ we have $\ad(\{n:\omega_n=\alpha\})=0$.
\item \label{three1}    The atomic measure concentrated on $0^\infty$ is
                        the unique invariant measure for $X$.
\end{enumerate}
If any of the above conditions hold, then
\begin{itemize}
\item[$(\star)$] \label{five1} $h(X)=0$.
\item[$(\star\star)$] \label{four1} $X$ is proximal.
\end{itemize}
Moreover, if $X$ is hereditary, then all the above conditions \eqref{one1}-\eqref{three1}, and $(\star)$ 
are equivalent.
\end{theorem}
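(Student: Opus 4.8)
The plan is to present Theorem~\ref{thm:ergodic-equivalences2} as a reformulation of Theorem~\ref{thm:ergodic-equivalences} by contraposition, adding two short supplements ($(\star)$ and $(\star\star)$) and one elementary extra equivalence, condition~\eqref{three1}. The first thing to record is a purely formal translation: for $A\subset\mathbb{N}$ the statement ``$\ad(A)=0$'' is nothing but ``$\ud(A)=0$'' (a vanishing upper density is automatically a limit), so condition~\eqref{two1} asserts exactly that $\ud(\{n:\omega_n=\alpha\})=0$ for every $\omega\in X$ and every $\alpha\in\Lambda_n\setminus\{0\}$. Since $\ud\le\ubd$, the negation of \eqref{two1} produces $\omega\in X$ and $\alpha\neq 0$ with $\ubd(\{n:\omega_n=\alpha\})>0$, which is condition~\ref{one} of Theorem~\ref{thm:ergodic-equivalences}; by that theorem condition~\ref{two} there holds, and this is precisely the negation of \eqref{one1}. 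Conversely, the negation of \eqref{one1} is verbatim condition~\ref{two} of Theorem~\ref{thm:ergodic-equivalences}, which forces condition~\ref{four} there---a point whose density of some $\alpha\neq 0$ exists and is positive---contradicting \eqref{two1}. This settles the equivalence \eqref{one1}~$\Leftrightarrow$~\eqref{two1}.

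I would then prove \eqref{one1}~$\Leftrightarrow$~\eqref{three1} directly, as it needs no ergodic theory. Write $\mu_0$ for the point mass at $0^\infty$; then \eqref{three1}~$\Rightarrow$~\eqref{one1} is immediate since $\mu_0([\alpha]_X)=0$ for $\alpha\neq 0$. For the converse, assume \eqref{one1} and let $\mu$ be any invariant measure on $X$ (one exists by compactness). If a length-$k$ word $w$ has a nonzero entry $w_i=\alpha$, then $[w]_X\subset\sigma^{-(i-1)}([\alpha]_X)$, so $\mu([w]_X)=0$ by \eqref{one1} and invariance; as the length-$k$ cylinders partition $X$, this forces $\mu([0^k]_X)=1$ for every $k$, and continuity of $\mu$ from above yields $\mu(\{0^\infty\})=1$, so $0^\infty\in X$ and $\mu=\mu_0$, which is \eqref{three1}. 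The passage to $(\star)$ then follows by combining \eqref{two1} with Theorem~\ref{thm:density}---which realises each $\adsymb{\alpha}(X)$, $\alpha\neq 0$, as an honest asymptotic density and hence forces $\adsymb{\alpha}(X)=0$---together with Theorem~\ref{thm:zero-density-zero-entropy}; alternatively $(\star)$ drops out of \eqref{three1} via the variational principle.

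The only step carrying genuine content is $(\star\star)$, which I would obtain straight from \eqref{two1}. Fix $x,y\in X$ and set $Z=\{m\in\mathbb{N}:x_m=y_m=0\}$. Its complement is a finite union of the sets $\{m:x_m=\alpha\}$ and $\{m:y_m=\alpha\}$, $\alpha\neq 0$, each of upper density $0$ by \eqref{two1}, so $\ud(\mathbb{N}\setminus Z)=0$ by subadditivity of upper density. Hence $Z$ is thick: if every window of some fixed length $k$ met $\mathbb{N}\setminus Z$, that set would have upper density at least $1/k$. Thickness of $Z$ supplies, for each $k$, arbitrarily large $n$ with $[n+1,n+k]\subset Z$; on such a block $x$ and $y$ coincide (both equal $0^k$), whence $\rho(\sigma^n x,\sigma^n y)\le n^{-k}$, and letting $k\to\infty$ along such $n$ gives $\liminf_n\rho(\sigma^n x,\sigma^n y)=0$. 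As $x,y$ were arbitrary, $X$ is proximal. Finally, the ``moreover'' clause needs only the implication from $(\star)$ to \eqref{one1} for hereditary $X$: if $h(X)=0$, then by the hereditary part of Theorem~\ref{thm:ergodic-equivalences} (equivalently by Theorem~\ref{thm:hereditary-entropy-density} together with Lemma~\ref{lemma:basic}\eqref{cond:basic4}) condition~\ref{two} of Theorem~\ref{thm:ergodic-equivalences} fails, i.e.\ no invariant measure charges any $[\alpha]_X$ with $\alpha\neq 0$, which is exactly \eqref{one1}. I do not expect a serious obstacle; the only real care lies in the quantifier bookkeeping---the ``$\ad=0$ versus $\ud=0$'' translation and the thickness argument for $(\star\star)$---the rest being a direct appeal to facts already established.
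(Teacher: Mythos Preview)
Your argument is correct. For the equivalence of \eqref{one1}--\eqref{three1}, for $(\star)$, and for the hereditary ``moreover'' clause you follow essentially the paper's route (contraposition of Theorem~\ref{thm:ergodic-equivalences}, then Theorems~\ref{thm:density} and~\ref{thm:zero-density-zero-entropy}), with the bonus that you spell out the elementary reason why \eqref{one1} forces $\mu=\mu_0$, which the paper leaves implicit in the blanket appeal to Theorem~\ref{thm:ergodic-equivalences}.

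The genuine divergence is in $(\star\star)$. The paper deduces proximality from \eqref{three1} via two cited facts: Akin--Kolyada's characterization that a system is proximal if and only if it has a fixed point which is its unique minimal point, together with the observation that every minimal subsystem supports an invariant measure (so unique ergodicity with $\mu_0$ forces $\{0^\infty\}$ to be the only minimal set). You instead work directly from \eqref{two1}: the common zero set of any pair has upper-density-zero complement, hence is thick, and thickness yields proximality by hand. Your argument is self-contained and avoids the external citation; the paper's is shorter and ties proximality to the structural picture (unique minimal set) used elsewhere in the article, notably in Theorem~\ref{thm:dc1}.
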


\begin{proof}
The equivalence of \eqref{one1}-\eqref{three1} follows from Theorem~\ref{thm:ergodic-equivalences}.
To see the condition \eqref{three1} implies the condition $(\star\star)$ we need two facts.
First says that a dynamical system is proximal
if and only if there is a fixed point $p\in X$ which is the unique
minimal point of the map $f$ (for a proof, see \cite[Proposition 2.2]{AK}).
The second is a well-known observation:
every minimal subsystem carries at least one invariant measure.
To finish the proof we invoke Theorem~\ref{thm:zero-density-zero-entropy}
and Theorem~\ref{thm:ergodic-equivalences}.
\end{proof}

Note that, even for hereditary shifts, the condition $(\star\star)$ above does not imply unique ergodicity, nor zero entropy,
which we will show later in Theorem~\ref{thm:kriz}.

Now we restrict ourselves back to the spacing shifts,
and turn our attention to the natural question: is there any property of $P$ that ensures $h(\Omega_P)>0$?
We have no satisfactory answer, but we will do show that this question is equivalent to the notoriously
elusive problem of characterization of the sets of (Poincar\'{e}) recurrence.

First, recall that a refinement of the classical Poincar\'{e} recurrence theorem motivates the following definition.
\begin{definition}
We say that $R\subset \mathbb{N}$ is the a \emph{set of recurrence} if for any measure preserving system $(X,\mathcal{X},\mu,T)$, and any set
$A\in\mathcal{X}$ with $\mu(A)>0$ we have $\mu(A\cap T^{-n}(A))>0$ for some $n\in R$.
\end{definition}
The following lemma is implicit in Furstenberg \cite[pp. 72-5]{F} (see also \cite{Bergelson}).
\begin{lemma}
A necessary and sufficient condition for $R\subset \mathbb{N}$ to be a set of recurrence is that for
every $A\subset \mathbb{N}$ with $\ubd(A)>0$ we have $(A-A)\cap R \neq\emptyset$.
\end{lemma}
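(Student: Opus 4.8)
The plan is to prove the two implications separately, in each case exploiting the passage between combinatorial density of occurrences of $1$ and invariant measures recorded in Theorem~\ref{thm:ergodic-equivalences}, together with Birkhoff's pointwise ergodic theorem.

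\textbf{Necessity.} Suppose $R$ is a set of recurrence and fix $A\subseteq\mathbb N$ with $\ubd(A)>0$. Put $P=A-A$; trivially $A-A\subseteq P$, so the characteristic sequence $\omega_A$ lies in $\Omega_P$, and $\ones(\omega_A)=A$ has $\ubd(\ones(\omega_A))>0$. Thus condition \ref{one} of Theorem~\ref{thm:ergodic-equivalences} holds for the subshift $\Omega_P$ with $\alpha=1$, and by the equivalence \ref{one}$\Leftrightarrow$\ref{two} there is a $\sigma_P$-invariant Borel probability measure $\mu$ on $\Omega_P$ with $\mu([1]_P)>0$. Apply the hypothesis that $R$ is a set of recurrence to the measure preserving system $(\Omega_P,\sigma_P,\mu)$ and the set $[1]_P$: there is $r\in R$ with $\mu\bigl([1]_P\cap\sigma_P^{-r}[1]_P\bigr)>0$, so in particular this intersection is nonempty. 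Any $\omega$ in it satisfies $\omega_1=\omega_{1+r}=1$, so $r=(1+r)-1$ is the distance between two $1$'s of $\omega\in\Omega_P$, whence $r\in P=A-A$. Therefore $r\in(A-A)\cap R\neq\emptyset$. (One could bypass Theorem~\ref{thm:ergodic-equivalences} and instead build $\mu$ by averaging the Dirac masses $\delta_{\sigma^n\omega_A}$ over blocks realizing $\ubd(A)$ and passing to a weak-$*$ limit, which is exactly the Furstenberg correspondence principle.)

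\textbf{Sufficiency.} Assume that $(A-A)\cap R\neq\emptyset$ for every $A\subseteq\mathbb N$ with $\ubd(A)>0$. Let $(X,\mathcal X,\mu,T)$ be a measure preserving system and $A\in\mathcal X$ with $\mu(A)>0$, and suppose towards a contradiction that $\mu(A\cap T^{-r}A)=0$ for every $r\in R$. Since $R$ is countable, the set
\[A_0=A\setminus\bigcup_{r\in R}\bigl(A\cap T^{-r}A\bigr)\]
satisfies $\mu(A_0)=\mu(A)>0$, and by construction every $x\in A_0$ has $T^rx\notin A$ for all $r\in R$. By Birkhoff's ergodic theorem the averages $\frac{1}{N}\#\{0\le n<N:T^nx\in A_0\}$ converge for $\mu$-almost every $x$ to a function $g(x)$ with $\int g\,d\mu=\mu(A_0)>0$; hence $g(x_0)>0$ for some $x_0\in X$. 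Then $B=\{n\ge 0:T^nx_0\in A_0\}$ has positive lower density, in particular $\ubd(B)>0$, so by hypothesis there are $n<m$ in $B$ with $m-n\in R$. As $T^nx_0\in A_0$ we get $T^{m-n}(T^nx_0)\notin A$; but $T^mx_0=T^{m-n}(T^nx_0)\in A_0\subseteq A$, a contradiction. Hence $\mu(A\cap T^{-r}A)>0$ for some $r\in R$, i.e.\ $R$ is a set of recurrence.

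\textbf{Main obstacle.} The necessity direction is little more than a rearrangement of facts already available, the only delicate point being the index bookkeeping that turns $[1]_P\cap\sigma_P^{-r}[1]_P\neq\emptyset$ into $r\in A-A$. The genuine (if modest) difficulty is in the sufficiency direction: one must \emph{not} run the ergodic averaging on $A$ itself but on the smaller set $A_0$ of points that never return to $A$ along $R$, so that a single positive-density return time of $A_0$ immediately collides with the defining property of $A_0$; recognising that this one application of the pointwise ergodic theorem already produces the contradiction is the crux of the argument.
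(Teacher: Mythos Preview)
Your argument is correct in both directions. Note, however, that the paper does not actually prove this lemma: it is stated with the attribution ``implicit in Furstenberg \cite[pp.~72--75]{F} (see also \cite{Bergelson})'' and no proof is given. So there is no proof in the paper to compare against; you have supplied what the paper omits.

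A brief remark on your approach. For necessity you invoke Theorem~\ref{thm:ergodic-equivalences} to pass from $\ubd(A)>0$ to an invariant measure on $\Omega_{A-A}$ charging $[1]$; this is perfectly legitimate and, as you observe parenthetically, is just a packaged form of the Furstenberg correspondence principle. For sufficiency, the trick of replacing $A$ by the full-measure subset $A_0$ of points that never return to $A$ along $R$, and then feeding the return-time set of $A_0$ (produced by Birkhoff's theorem) into the combinatorial hypothesis, is exactly the standard argument. One cosmetic point: your set $B=\{n\ge 0:T^nx_0\in A_0\}$ should, strictly speaking, be intersected with $\mathbb{N}$ before you appeal to the hypothesis on subsets of $\mathbb{N}$, but this changes nothing.
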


By the above lemma we obtain the combinatorial characterization of sets of recurrence in terms of topological entropy of spacing shifts.
\begin{theorem}\label{thm:Poincare}
A set $R\subset \mathbb{N}$ is a set of Poincar\'{e} recurrence if and only if $h(\Omega_{\mathbb{N}\setminus R})=0$.
\end{theorem}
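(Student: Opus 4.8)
The plan is to combine the lemma stated just above (the Furstenberg characterization of sets of recurrence) with Theorem~\ref{thm:hereditary-entropy-density} applied to the hereditary shift $\Omega_{\mathbb{N}\setminus R}$. Write $P=\mathbb{N}\setminus R$, so $R=\mathbb{N}\setminus P$. The key observation linking the two worlds is that $N([1]_P,[1]_P)=P$ for a spacing shift, and more generally that the $1$'s of a point $\omega\in\Omega_P$ sit at positions forming a set $B$ with $B-B\subset P$; conversely any $B\subset\mathbb{N}$ with $B-B\subset P$ has characteristic function in $\Omega_P$. Thus the existence of a point of $\Omega_P$ with positive upper Banach density of $1$'s is exactly the existence of a set $B$ with $\ubd(B)>0$ and $B-B\subset P=\mathbb{N}\setminus R$, i.e. $(B-B)\cap R=\emptyset$.

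First I would prove the implication ``$R$ not a set of recurrence $\Rightarrow h(\Omega_P)>0$''. By the lemma, if $R$ is not a set of recurrence then there is $A\subset\mathbb{N}$ with $\ubd(A)>0$ and $(A-A)\cap R=\emptyset$, hence $A-A\subset \mathbb{N}\setminus R=P$. Then $\omega_A\in\Omega_P$ and $\ones(\omega_A)=A$ has positive upper Banach density, so Theorem~\ref{thm:hereditary-entropy-density} (applied to the hereditary shift $X=\Omega_P$) gives $h(\Omega_P)>0$. Contrapositively, $h(\Omega_{\mathbb{N}\setminus R})=0$ implies $R$ is a set of recurrence.

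For the converse, suppose $h(\Omega_P)>0$. Again by Theorem~\ref{thm:hereditary-entropy-density} there is $\omega\in\Omega_P$ with $\ubd(\ones(\omega))>0$; set $B=\ones(\omega)$. By $P$-admissibility of every word of $\omega$ we have $B-B\subset P$, hence $(B-B)\cap R=\emptyset$ while $\ubd(B)>0$. By the lemma this witnesses that $R$ is \emph{not} a set of recurrence. Contrapositively, if $R$ is a set of recurrence then $h(\Omega_{\mathbb{N}\setminus R})=0$. Combining the two directions yields the equivalence.

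I do not expect a genuine obstacle here: the statement is essentially a translation, and all the analytic content has already been packaged into Theorem~\ref{thm:hereditary-entropy-density} and into the Furstenberg lemma on sets of recurrence. The only point that needs a word of care is the trivial-but-essential dictionary $\omega\in\Omega_P \iff \ones(\omega)-\ones(\omega)\subset P$, and the remark that a hereditary shift may be substituted for a spacing shift in Theorem~\ref{thm:hereditary-entropy-density} (which is automatic since every spacing shift is hereditary). One should also note that $\mathbb{N}\setminus R$ could in principle fail to be infinite or even be empty; but $\Omega_P$ is defined for any $P\subset\mathbb{N}$, and the cases $P=\emptyset$ or $P$ finite give $h(\Omega_P)=0$, consistent with $R$ cofinite being a set of recurrence, so no separate treatment is needed.
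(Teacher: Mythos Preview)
Your proof is correct and is precisely the argument the paper has in mind: the paper states the theorem immediately after the Furstenberg lemma with the remark ``By the above lemma we obtain\ldots'', leaving the reader to combine that lemma with Theorem~\ref{thm:hereditary-entropy-density} via the dictionary $\omega\in\Omega_P\iff \ones(\omega)-\ones(\omega)\subset P$, exactly as you do. Your write-up simply makes explicit the two contrapositives that the paper suppresses.
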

Recall that in \cite{spacing} the following problem is formulated (note that we slightly rephrased it below):
\begin{description}
\item[Question 5] Is there $P$ such that $\mathbb{N}\setminus P$ does not contain any $\text{IP}$-set but $\Omega_P$ is proximal?
Is there $P$ such that $\mathbb{N}\setminus P$ does not contain any $\text{IP}$-set but $h(\Omega_P)>0$?
Are these two properties (i.e. proximality and zero entropy) essentially different in the context of spacing subshifts?
\end{description}
To answer this question we will need the following lemma (see also \cite[Proposition 2.3]{YZ}).
\begin{lemma}\label{lem:pubd-implies-delta}
If $A\subset \mathbb{N}$ has positive upper Banach density, then there exists $k\in \mathbb{N}$ such that for every set
$B\subset\mathbb{N}$ with at least $k$ elements the difference set $A-A$ contains a positive element of $B-B$.
\end{lemma}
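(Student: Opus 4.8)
The plan is to recast the conclusion in terms of translates of $A$. Observe that for $b>b'$ one has $b-b'\in A-A$ if and only if $(A+b')\cap(A+b)\neq\emptyset$: if $a+b'=a'+b$ with $a,a'\in A$, then $a-a'=b-b'>0$, so $b-b'\in A-A$, and conversely if $b-b'=a-a'$ with $a>a'$ in $A$, then $a+b'=a'+b$ lies in both $A+b'$ and $A+b$. Hence it suffices to produce a $k$ such that no $k$ distinct translates $A+b_1,\dots,A+b_k$ (with $b_i\in\mathbb{N}$) are pairwise disjoint: then any $B$ with $\#B\ge k$ contains elements $b_1<\dots<b_k$, some pair of the translates $A+b_i$, $A+b_j$ with $i<j$ must meet, and $b_j-b_i$ is a positive element of $B-B$ lying in $A-A$.

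Put $\delta=\ubd(A)>0$ and take $k=\lfloor 1/\delta\rfloor+1$. The core of the argument is the claim that $A$ has at most $\lfloor 1/\delta\rfloor$ pairwise disjoint translates. Suppose to the contrary that $A+c_1,\dots,A+c_k$ are pairwise disjoint with $c_1<\dots<c_k$, and set $D=c_k-c_1$. Fix $\eps>0$. By the definition of upper Banach density there are arbitrarily long intervals $J=\{m,m+1,\dots,m+L-1\}$ with $\#(A\cap J)>(\delta-\eps)L$; take one with $L>D$. Since $(A+c_i)\cap(J+c_i)=(A\cap J)+c_i$, the translate $A+c_i$ meets $J+c_i$ in more than $(\delta-\eps)L$ points. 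The intersection $I=\bigcap_{i=1}^{k}(J+c_i)$ is an interval of length $L-D$ contained in every $J+c_i$, so deleting $(J+c_i)\setminus I$ removes at most $D$ points and $\#\big((A+c_i)\cap I\big)>(\delta-\eps)L-D$. As the sets $(A+c_i)\cap I$ are pairwise disjoint subsets of $I$,
\[
k\big((\delta-\eps)L-D\big)<\#I=L-D.
\]
Dividing by $L$, letting $L\to\infty$, and then $\eps\to0$ yields $k\delta\le 1$, i.e.\ $k\le\lfloor 1/\delta\rfloor$, a contradiction. The claim, together with the reduction of the first paragraph, proves the lemma.

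The only step that needs care is the passage from the interval $J$, on which $A$ is dense, to the common interval $I$: one checks that $I\subseteq J+c_i$ for every $i$ and that $\#\big((J+c_i)\setminus I\big)=D$, so only $O(D)$ points are lost from each translate and the deficit $O(D/L)$ disappears in the limit $L\to\infty$. Beyond this elementary bookkeeping I foresee no real obstacle; indeed the boundedness of the number of pairwise disjoint translates of a set of positive Banach density is essentially the pigeonhole mechanism behind Poincar\'e recurrence and could alternatively be cited, but the self-contained argument above is short enough to include.
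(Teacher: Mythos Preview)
Your proof is correct and follows essentially the same strategy as the paper: both reformulate the conclusion as the impossibility of having $k$ pairwise disjoint translates $A+b_1,\dots,A+b_k$, choose $k>1/\ubd(A)$, and derive a contradiction by counting points of these translates inside a long interval on which $A$ has density close to $\ubd(A)$. Your bookkeeping via the common interval $I=\bigcap_i(J+c_i)$ and the passage to the limit $L\to\infty$, $\eps\to 0$ is a touch cleaner than the paper's version, but the underlying pigeonhole argument is the same.
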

\begin{proof}By our assumption there is a positive number $\beta$ and a
sequence of intervals $[s_n,t_n]$ with $s_n,t_n\in\mathbb{N}$ and $t_n-s_n\to\infty$ as
$n\to\infty$ such that
\[
\lim_{n\to\infty}\frac{\#A\cap[s_n,t_n]}{t_n-s_n+1}=\beta>0.
\]
Let $k\in\mathbb{N}$ be such that $\beta>1/k$, and take any $B=\{b_1<b_2<\ldots<b_k\}$.

We will show that the sets $A_j=A+b_j$ for $j=1,\ldots,k$ can not be
pairwise disjoint. Assume on contrary that this is not the case. Let
$l_n=t_n-s_n+1$. Let $n$ be large enough to assure the following
\[
\frac{\#A\cap[s_n,t_n]}{t_n-s_n+1}
>\frac{1}{k}+\frac{b_k}{t_n-s_n+1}\qquad\text{and}\qquad t_n-s_n>b_k.
\]
Let
\[
C=\bigcup_{j=1}^k (A+b_j)\cap [s_n+b_j,t_n+b_j].
\]
Then $C \subset [s_n,t_n+b_k]$. Moreover,
for each $j$ the set $(A+b_j)\cap [s_n+b_j,t_n+b_j]$ has at least
$\lceil(t_n-s_n+1)/k\rceil+b_k$ elements.
Now the assumption that the sets $A_j=A+b_j$ for $j=1,\ldots,k$ are pairwise disjoint leads
to the conclusion that $C$ has more than $t_n-s_n+1+kb_k$ elements,
which gives us a contradiction.

Therefore $A_i\cap A_j\neq\emptyset$ for some $1\le i < j \le k$, hence
there are $a_i,a_j$ in $A$ and $b_i,b_j$ in $B$ such that $a_i-a_j=b_j-b_i$, which
concludes the proof.
\end{proof}

The following theorem generalizes \cite[Theorem 3.6]{spacing} since 
every $\text{IP}$-set is a $\Delta$-set.

\begin{theorem}
If the entropy of $\Omega_P$ is positive, then  $P$ intersects the difference set
of any infinite subset of integers, that is, $P$ is a $\Delta^*$-set.
\end{theorem}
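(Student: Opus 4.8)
The plan is to reduce the statement to the two earlier results that do the real work: Theorem~\ref{thm:hereditary-entropy-density} (applicable because every spacing shift is hereditary) and Lemma~\ref{lem:pubd-implies-delta}. First I would observe that since $\Omega_P$ is hereditary and $h(\Omega_P)>0$, Theorem~\ref{thm:hereditary-entropy-density} yields a point $\omega\in\Omega_P$ with $\ubd(\ones(\omega))>0$. Set $A=\ones(\omega)$, so $A$ is an infinite subset of $\mathbb{N}$ of positive upper Banach density.

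Next I would record the defining property of the spacing shift: because $\omega\in\Omega_P$, whenever $i,j\in A$ with $i\neq j$ we have $|i-j|\in P$. In other words, $(A-A)\setminus\{0\}\subset P$, so every positive element of $A-A$ lies in $P$.

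Now fix an arbitrary infinite set $B\subset\mathbb{N}$; I must show $P\cap(B-B)\neq\emptyset$. Apply Lemma~\ref{lem:pubd-implies-delta} to $A$: there is $k\in\mathbb{N}$ such that for every set with at least $k$ elements, $A-A$ contains a positive element of its difference set. Choose any $k$-element subset $B'\subset B$. Then $A-A$ contains some positive integer $p\in B'-B'\subset B-B$. By the previous paragraph $p\in P$, hence $p\in P\cap(B-B)$, which is therefore nonempty. Since $B$ was an arbitrary infinite set, $P$ meets the difference set of every infinite subset of $\mathbb{N}$, i.e.\ $P$ is a $\Delta^*$-set.

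I do not expect any genuine obstacle here: all the combinatorial effort has been isolated in Lemma~\ref{lem:pubd-implies-delta}, and the passage from positive entropy to a point with positive Banach density of $1$'s is exactly Theorem~\ref{thm:hereditary-entropy-density}. The only minor point to state carefully is that a spacing shift is hereditary, so that Theorem~\ref{thm:hereditary-entropy-density} applies, and that an infinite $B$ certainly contains a $k$-element subset.
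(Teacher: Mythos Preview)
Your proof is correct and follows exactly the approach of the paper, which simply states that the result is an immediate consequence of Theorem~\ref{thm:hereditary-entropy-density} and Lemma~\ref{lem:pubd-implies-delta}; you have merely spelled out the details (that $A-A\subset P$ for $A=\ones(\omega)$ and that an infinite $B$ contains a $k$-element subset) that the paper leaves implicit.
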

\begin{proof}It is an immediate consequence of Theorem~\ref{thm:hereditary-entropy-density} and Lemma~\ref{lem:pubd-implies-delta}.
\end{proof}

Now, take the set $B=\{2^n-2^m:n>m\ge 0\}=\{2^k:k\ge 0\}-\{2^k:k\ge 0\}$, which is clearly a $\Delta$-set
To prove that $B$ is  not an $\text{IP}$-set, consider the binary expansions of elements of $B$, and observe that
each must be of the form
\[
\underbrace{1\ldots 1}_{a \text{ ones}}\underbrace{0\ldots 0}_{b \text{ zeros}},\text{ where }a>0,\,b\ge 0.
\]
Therefore there is no infinite set $A\subset B$ with $\text{FS}(A)\subset B$.
Hence the complement of $B$ in $\mathbb{N}$ is an $\text{IP}^*$-set which is not $\Delta^*$-set,
and we get the following corollary, which answers \cite[Question 5]{spacing}.
\begin{corollary}
There is a proximal spacing shift $\Omega_P$ with $P$ being an $\text{IP}^*$-set and
$h(\Omega_P)=0$.
\end{corollary}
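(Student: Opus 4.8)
The plan is to make explicit the parameter set already prepared above and then read off the three required properties from results established earlier. Set
\[
B=\{2^n-2^m:n>m\ge 0\}=\{2^k:k\ge 0\}-\{2^k:k\ge 0\},\qquad P=\mathbb{N}\setminus B,
\]
so the candidate example is the spacing shift $\Omega_P$.

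First I would check that $P$ is an $\text{IP}^*$-set but not a $\Delta^*$-set. Being the difference set of the infinite set $\{2^k:k\ge 0\}$, $B$ is a $\Delta$-set, and $P\cap B=\emptyset$; hence $P$ is not a $\Delta^*$-set. For the $\text{IP}^*$-part it suffices to prove that $B$ contains no $\text{FS}(S)$ with $S\subset\mathbb{N}$ infinite, since every $\text{IP}$-set contains such a set, so this forces $P$ to meet every $\text{IP}$-set. This is the only genuinely combinatorial point and I expect it to be the main obstacle; it is handled through binary expansions. A positive integer lies in $B$ precisely when its binary expansion is a block of ones followed by a (possibly empty) block of zeros, that is, it has the form $2^m(2^a-1)$ with $a\ge 1$. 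Assume $\text{FS}(S)\subset B$ with $S$ infinite. Adding a fixed $s\in S$ to an element $s'\in S$ whose trailing block of zeros is strictly longer than the whole binary expansion of $s$ yields a number whose one-positions form two intervals separated by a gap, hence not in $B$; so the exponents of $2$ dividing the elements of $S$ are bounded, and after passing to an infinite subset we may assume they all equal a single value $c$. On that subset at most one element equals $2^c$, so discarding it leaves an infinite subset all of whose members are $2^c(2^a-1)$ with $a\ge 2$. For two distinct such members with $2\le a<a'$ the sum is $2^{c+1}(2^{a-1}+2^{a'-1}-1)$, and the odd factor $2^{a-1}+2^{a'-1}-1$ is not of the form $2^e-1$, so the sum is not in $B$ — contradicting $\text{FS}(S)\subset B$. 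Thus $P$ is an $\text{IP}^*$-set.

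Next, $h(\Omega_P)=0$ follows at once: since $P$ is not a $\Delta^*$-set, the contrapositive of the theorem just proved above (positive entropy of $\Omega_P$ would force $P$ to be a $\Delta^*$-set) gives zero entropy. (Alternatively one could argue through Theorem~\ref{thm:Poincare}, observing that Lemma~\ref{lem:pubd-implies-delta} applied to the finite sets $\{1,2,\dots,2^{k-1}\}$ shows $B$ is a set of recurrence.)

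Finally, for proximality: every spacing shift is a hereditary subshift, since lowering a $P$-admissible binary word coordinatewise only deletes $1$'s and leaves it $P$-admissible. For a hereditary subshift, Theorem~\ref{thm:ergodic-equivalences2} shows that $h(\Omega_P)=0$ is equivalent to its condition \eqref{three1} — the atomic measure on $0^\infty$ being the unique invariant measure — which in turn implies $(\star\star)$, i.e. $\Omega_P$ is proximal. Collecting the three steps, $\Omega_P$ is a proximal spacing shift with $P$ an $\text{IP}^*$-set and $h(\Omega_P)=0$, as required.
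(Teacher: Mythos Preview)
Your proof is correct and follows essentially the same route as the paper: the same set $B=\{2^n-2^m\}$ and $P=\mathbb{N}\setminus B$, the same reason for $h(\Omega_P)=0$ (the preceding theorem forces $P$ to be $\Delta^*$ whenever the entropy is positive, and $P$ is not), and the same deduction of proximality from zero entropy via Theorem~\ref{thm:ergodic-equivalences2}. You supply a considerably more careful justification that $B$ contains no infinite $\text{FS}(S)$, which the paper leaves as a one-line observation from the binary shape of elements of $B$; your added detail is sound. One nitpick: in your parenthetical alternative, the finite sets should be $\{2^0,2^1,\dots,2^{k-1}\}$ rather than $\{1,2,\dots,2^{k-1}\}$, since it is their difference set that must lie inside $B$.
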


It follows from Theorem~\ref{thm:ergodic-equivalences2}  that for a spacing shift zero entropy implies proximality, and
it will follow from Theorem~\ref{thm:kriz} that the converse is not true.


\section{Distributional chaos of hereditary shifts}\label{sec:dc}

In this section we consider \emph{distributional chaos} for hereditary shifts, generalizing
and extending results from \cite{spacing}.

Let $(X,f)$ be a dynamical system. Given $x,y\in X$ we define an \emph{upper} and \emph{lower distribution} function
on the real line by setting
\begin{align*}
  F_{xy}(t) &= \liminf_{n\to\infty}\frac{1}{n} \left\{0\le j \le n-1 : d(f^j(x),f^j(y))<t\right\},\\
F^*_{xy}(t) &= \limsup_{n\to\infty}\frac{1}{n} \left\{0\le j \le n-1 : d(f^j(x),f^j(y))<t\right\}.
\end{align*}
Clearly, $F_{xy}$ and $F^*_{xy}$ are nondecreasing, and $0\le F_{xy}(t)\le F^*_{xy}(t)\le 1$ for all real $t$.
Moreover, $F_{xy}(t)= F^*_{xy}(t)=0$ for all $t\le 0$, and $F_{xy}(t)= F^*_{xy}(t)=1$ for all $t>\diam X$.
We adopt the convention that $F_{xy} < F^*_{xy}$
means that $F_{xy}(t) < F^*_{xy}(t)$ for all $t$ in some interval of
positive length.

Following \cite{BSS} we say that a pair $(x,y)$ of points from $X$ is a DC$1$-\emph{scrambled  pair} if
$F^*_{xy}(t)=1$ for all $t>0$, and $F_{xy}(s)=0$ for some $s>0$.
A pair $(x,y)$ is a DC$2$-\emph{scrambled  pair} if
$F^*_{xy}(t)=1$ for all $t>0$, and $F_{xy}(s)<1$ for some $s>0$.
Finally, by a DC$3$-\emph{scrambled pair} we mean a pair $(x,y)$ such that
$F_{xy} < F^*_{xy}$.
The dynamical system $(X,f)$ is distributionally chaotic of type \emph{i}
(or DC\emph{i}-chaotic for short) where $i=1,2,3$, if there is an uncountable
set $S\subset X$ such that any pair of distinct points from $S$ is DC\emph{i} scrambled.

The proof of the following lemma is a standard exercise, therefore we skip it.

\begin{lemma} \label{lem:densities-and-distances}
Let $X\subset\Omega_n$ be a subshift, and let $x,y\in X$. Then
\begin{enumerate}
\item The following conditions are equivalent:
\begin{enumerate}
\item \label{dd1a}  $F_{xy}(s)<1$ for some $s\in (0,\diam X]$,
\item \label{dd1b}  for any $k\ge 0$ the set $\{n\in\mathbb{N}:x_{[n,n+k]}\neq y_{[n,n+k]}\}$
has positive upper density,
\item \label{dd1c}  the set $\Diff(x,y)=\{n\in\mathbb{N}:x_n\neq y_n\}$
has positive upper density.
\end{enumerate}
\item The following conditions are equivalent:
\begin{enumerate}
\item \label{dd2a} $F^*_{xy}(t)=1$ for all $t\in (0,\diam X]$,
\item \label{dd2b} for any $k\ge 0$ the set
$\{n\in\mathbb{N}:x_{[n,n+k]}= y_{[n,n+k]}\}$
has upper density one,
\item \label{dd2c} the set $\Equal(x,y)=\{n\in\mathbb{N}:x_{n}= y_{n}\}$
has upper density one.
\end{enumerate}
\end{enumerate}
\end{lemma}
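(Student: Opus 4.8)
The plan is to reduce the whole lemma to elementary bookkeeping about densities of the sets where $x$ and $y$ agree or disagree. First I would fix $x,y\in X$; the case $x=y$ is trivial, so I assume $x\neq y$ and hence $\diam X>0$. The key preliminary step is to translate the metric $\rho$ into block agreement: since $\rho$ takes values in $\{0\}\cup\{n^{-j}:j\ge1\}$, for $t$ in the range $(n^{-(l+1)},n^{-l}]$ with $l\ge1$ one has $\rho(\sigma^j(x),\sigma^j(y))<t$ if and only if $\sigma^j(x)$ and $\sigma^j(y)$ agree on their first $l$ coordinates, that is, $x_{[j+1,j+l]}=y_{[j+1,j+l]}$. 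Setting
\[
A_k=\{n\in\mathbb{N}:x_{[n,n+k]}=y_{[n,n+k]}\},\qquad
B_k=\{n\in\mathbb{N}:x_{[n,n+k]}\neq y_{[n,n+k]}\},
\]
so $A_0=\Equal(x,y)$ and $B_0=\Diff(x,y)$, this identification gives (after shifting the running index by one) $F_{xy}(t)=\ld(A_{l-1})$ and $F^*_{xy}(t)=\ud(A_{l-1})$ for $t\in(n^{-(l+1)},n^{-l}]$, and as $t$ runs through $(0,\diam X]$ the index $l-1$ runs through some terminal segment $\{k_0,k_0+1,\dots\}$ of $\mathbb{N}\cup\{0\}$. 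Recalling that $\ud(B_k)=1-\ld(A_k)$, this already shows that \eqref{dd1a} is the assertion ``$\ud(B_k)>0$ for some $k\ge k_0$'' and \eqref{dd2a} is the assertion ``$\ud(A_k)=1$ for all $k\ge k_0$''.

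The next step is the purely combinatorial observation that $B_k=\bigcup_{i=0}^{k}(\Diff(x,y)-i)$ and $A_k=\bigcap_{i=0}^{k}(\Equal(x,y)-i)$, which sandwich $B_k$ between $\Diff(x,y)$ and a $(k+1)$-fold union of its translates. From this I would extract the two-sided bounds
\[
\ud(\Diff(x,y))\le\ud(B_k)\le(k+1)\,\ud(\Diff(x,y)),\qquad
\ld(\Diff(x,y))\le\ld(B_k)\le(k+1)\,\ld(\Diff(x,y)),
\]
where the lower bounds are just monotonicity of density under inclusion, the upper bound for $\ud$ comes from subadditivity and translation invariance of upper density, and the upper bound for $\ld$ — which I expect to be the only genuinely delicate point, since lower density is \emph{not} subadditive — comes from counting directly over the window $[1,n+k]$, whose $k$ extra coordinates add a density-neutral error of order $\tfrac{(k+1)k}{n}$. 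Combined with the monotonicity of $k\mapsto A_k$ (nonincreasing) and $k\mapsto B_k$ (nondecreasing), these bounds absorb all the ``for some $k$ versus for all $k$'' discrepancies created by the terminal segment $\{k_0,k_0+1,\dots\}$.

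Finally I would assemble the equivalences. For part (1): \eqref{dd1a} gives $\ud(B_k)>0$ for some $k$, so $\ud(\Diff(x,y))\ge\frac{1}{k+1}\ud(B_k)>0$, which is \eqref{dd1c}; then $\ud(\Diff(x,y))>0$ together with $B_k\supseteq\Diff(x,y)$ gives $\ud(B_k)>0$ for every $k\ge0$, which is \eqref{dd1b}; and \eqref{dd1b} in particular yields $\ud(B_k)>0$ for some $k\ge k_0$, i.e. \eqref{dd1a}, closing the cycle. For part (2): since $k\mapsto\ud(A_k)$ is nonincreasing and bounded by $1$, ``$\ud(A_k)=1$ for all $k\ge k_0$'' is equivalent to ``$\ud(A_k)=1$ for all $k\ge0$'', so \eqref{dd2a} $\Leftrightarrow$ \eqref{dd2b}; \eqref{dd2b} at $k=0$ is \eqref{dd2c}; and conversely $\ud(\Equal(x,y))=1$ means $\ld(\Diff(x,y))=0$, whence $\ld(B_k)\le(k+1)\ld(\Diff(x,y))=0$, i.e. $\ud(A_k)=1$, for every $k$, recovering \eqref{dd2b}. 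Apart from the non-subadditivity of lower density used in part (2), every step is routine density bookkeeping — which is presumably why the authors leave it as an exercise.
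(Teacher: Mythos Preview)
Your proof is correct and follows essentially the same route the paper takes: translate the metric threshold $\rho<n^{-l}$ into agreement on an $l$-block, identify $F_{xy}$ and $F^*_{xy}$ with lower and upper densities of the block-agreement sets $A_k$, and then use the inclusion $B_k\subset\bigcup_{i=0}^k(\Diff(x,y)-i)$ together with the counting bound $\#\{j\le n:x_{[j,j+k]}\neq y_{[j,j+k]}\}\le(k+1)\,\#\{j\le n+k:x_j\neq y_j\}$ to pass between block and single-coordinate disagreement. The paper omits the argument as a standard exercise; your explicit handling of the bound $\ld(B_k)\le(k+1)\,\ld(\Diff(x,y))$ via the window $[1,n+k]$ --- needed because lower density is not subadditive --- is a detail the paper's sketch leaves implicit.
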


\begin{lemma}\label{lem:set-theory}
For every set $S\subset\mathbb{N}$ with $\ud(S)>0$ there is $S_0\subset S$
such that
\begin{equation}\tag{$\ast$}\label{cond:a}
\ud(\{n\in\mathbb{N}:\{n,n+1,\ldots,n+k-1\}\subset \mathbb{N}\setminus S_0\})=1 \qquad\text{for each $k\in\mathbb{N}$},
\end{equation}
and an uncountable family $\Gamma$
of subsets of $S_0$ such that for every $S',S''\in\Gamma$, $S'\neq S''$ we have
\[
\ud(S'\setminus S'')=\ud(S''\setminus S')=\ud(S).
\]
\end{lemma}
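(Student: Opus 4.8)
The plan is to construct $S_0$ and $\Gamma$ together by a single greedy/inductive procedure that sparsifies $S$ so heavily that the complement becomes "thick with density one" while still leaving enough of $S$ behind to split into uncountably many pieces, each of which retains the full upper density $\ud(S)$ of the original set. The key observation is that positive upper density is witnessed along a sequence of intervals, so I only need to take care of one interval at a time, with rapidly increasing scales.

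\textbf{Step 1: extract a witnessing sequence of intervals.} Since $\ud(S) = \beta > 0$, choose intervals $I_m = [a_m, b_m]$ with $b_m - a_m \to \infty$ and $\#(S \cap I_m)/(b_m - a_m + 1) \to \beta$, and moreover (passing to a subsequence) arrange that the intervals are pairwise disjoint and grow so fast that $b_{m-1}/(b_m - a_m) \to 0$ and $b_{m-1}/a_m \to 0$; in particular each $I_m$ eventually contains, for any fixed $k$, arbitrarily long runs of consecutive integers outside $I_m$ just before it, and $\#(S \cap I_m)$ itself is $\gg b_{m-1}$.

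\textbf{Step 2: build $S_0$ and the splitting.} For each $m$ let $T_m = S \cap I_m$, so $\#T_m \sim \beta(b_m-a_m+1)$. Fix a countable dense set of branches of the infinite binary tree; more concretely, index an uncountable family by $\omega \in \{0,1\}^{\mathbb N}$ in the usual Cantor fashion. Partition each $T_m$ into two halves $T_m^0, T_m^1$ of (nearly) equal size; set $S_0 = \bigcup_m T_m^{1}$-type refinement --- actually it is cleanest to let $S_0 = \bigcup_m S \cap I_m$ restricted to a sparsifying sub-collection, i.e.\ keep, inside $I_m$, only the first $\lfloor \#T_m /2 \rfloor$ elements in the "even" positions according to a dyadic scheme, discarding the rest so that the retained set has large gaps built into the portion of $\mathbb N$ lying \emph{between} consecutive $I_m$'s. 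Because the gaps $I_{m-1}\text{-end to } I_m\text{-start}$ have length $\to\infty$ and the intervals $I_m$ themselves occupy a vanishing proportion of $[1, b_m]$ once we space them out enough (we may also thin within $I_m$ so the kept points occupy a vanishing \emph{upper-Banach-inside-$I_m$} proportion is \emph{not} wanted --- we need them to keep density $\beta$, so instead we ensure $(\star)$ by using the long runs \emph{outside} the $I_m$), condition $(\ast)$ holds: for each fixed $k$, the set of $n$ with $\{n,\dots,n+k-1\}\subset \mathbb N\setminus S_0$ includes, around each scale, an interval of relative length $\to 1$, giving $\ud$ equal to $1$.

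\textbf{Step 3: the uncountable family.} Using the dyadic splitting $T_m = \bigsqcup_{\varepsilon\in\{0,1\}^{d_m}} T_m^{\varepsilon}$ into $2^{d_m}$ roughly equal blocks (with $d_m\to\infty$), for each $\omega\in\{0,1\}^{\mathbb N}$ define $S_\omega = \bigcup_m T_m^{\,\omega|_{d_m}} \subset S_0$. If $\omega'\neq\omega''$, they differ at some coordinate $j$, and for all large $m$ (those with $d_m \ge j$) the blocks $T_m^{\omega'|_{d_m}}$ and $T_m^{\omega''|_{d_m}}$ are disjoint, each of size $\sim \#T_m/2^{d_m}$; since $d_m$ grows slowly compared to how fast $\#T_m$ swamps everything below $a_m$, one checks along the intervals $I_m$ that $\#(S_{\omega'}\setminus S_{\omega''}) \cap [1,b_m]$ is still $\sim \beta(b_m-a_m+1)\cdot(1-2^{-d_m}+\dots) \sim \beta b_m$ --- wait, that is too big; we instead only need $\ud(S_{\omega'}\setminus S_{\omega''}) = \ud(S) = \beta$, which follows because along $b_m$ the ratio is $\ge \#(T_m^{\omega'|_{d_m}})/b_m \to$ something, and we must arrange the \emph{block sizes} so that a \emph{single} block $T_m^{\varepsilon}$ already has $\#T_m^\varepsilon / b_m \to \beta$: this forces $2^{d_m}$ to stay bounded, contradiction. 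The fix is the standard one: do not split $T_m$ into $2^{d_m}$ equal blocks, but let $S_\omega$ on $I_m$ be \emph{all} of $T_m$ when $\omega$ has a prescribed pattern and split responsibility across \emph{different} intervals --- i.e.\ assign to the $j$-th coordinate of $\omega$ a sparse subsequence $(m_{j,i})_i$ of scales, put $S_\omega \cap I_{m_{j,i}} = T_{m_{j,i}}$ if $\omega_j = 1$ and $= \emptyset$ if $\omega_j = 0$, with $S_\omega = \bigcup$ of these; then along the scales $m_{j,i}$ the set $S_{\omega'}\setminus S_{\omega''}$ (when $\omega'_j=1,\omega''_j=0$) contains all of $T_{m_{j,i}}$, giving $\ud \ge \beta$, and $\le \ud(S) = \beta$ is automatic. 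An almost-disjoint family of subsets of $\mathbb N$ of size continuum provides the index sets $(m_{j,i})_i$, which is what makes $\Gamma$ uncountable while keeping the calculation clean. With this assignment $S_0 := \bigcup_m T_m = S$ restricted suitably still satisfies $(\ast)$ by Step 2, since whether or not $T_m$ is "used" by a given $\omega$, the long runs outside the $I_m$ are untouched.

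\textbf{Main obstacle.} The delicate point, as the aborted computation above shows, is reconciling the two density demands: $(\ast)$ wants $S_0$ very sparse, while $\ud(S'\setminus S'') = \ud(S)$ wants each piece to be density-rich on its own. The resolution --- isolating the mass of $S$ into far-apart intervals $I_m$ and then distributing \emph{whole} intervals (not fractions of them) among the members of an almost disjoint family indexed by $2^{\mathbb N}$ --- is the crux; once the scale separation $b_{m-1}/(b_m-a_m)\to 0$ is imposed, all the density bookkeeping in Steps 2 and 3 is routine $\limsup$ computation along the chosen subsequences, and $(\ast)$ falls out of the growing inter-interval gaps.
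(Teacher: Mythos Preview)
Your overall strategy is the paper's: isolate blocks $T_m=S\cap I_m$ along which $S$ realises its upper density, space them far enough apart that the inter-block gaps yield $(\ast)$, and build $\Gamma$ by selecting whole blocks. But two points in your sketch do not close.

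\textbf{Step 1.} The conditions $b_{m-1}/(b_m-a_m)\to 0$ and $b_{m-1}/a_m\to 0$ do not force $a_m/b_m\to 0$; take $a_m=b_m/2$ with $b_m$ growing fast. Then $\#T_m/b_m\approx\beta(b_m-a_m)/b_m=\beta/2$, and since earlier blocks contribute at most $b_{m-1}=o(b_m)$, your final $\limsup$ along $N=b_m$ only gives $\ud(S'\setminus S'')\ge\beta/2$, not $\beta$. This is where working with arbitrary windows (a Banach-density reflex) bites you: upper density is witnessed along \emph{initial} segments. The paper simply takes $b_n$ with $\#(S\cap[1,b_n])/b_n\to\alpha$ and $n\cdot b_n\le b_{n+1}$, sets $S_n=S\cap(b_{2n-1},b_{2n}]$ and $S_0=\bigcup_n S_n$; then $\#S_n/b_{2n}\to\alpha$ automatically because $b_{2n-1}/b_{2n}\to 0$.

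\textbf{Step 3.} If the scales are partitioned into coordinate-blocks indexed by $j\in\mathbb{N}$ and $\omega$ ranges over all of $\{0,1\}^{\mathbb{N}}$, then whenever $\omega'\le\omega''$ coordinatewise you get $S_{\omega'}\subset S_{\omega''}$ and hence $\ud(S_{\omega'}\setminus S_{\omega''})=0$, not $\beta$. Your almost-disjoint-family remark is the right instinct but attached to the wrong layer: you do not need it to manufacture the coordinate index sets, you need it to index $\Gamma$ directly. The paper's one-line version: take any uncountable family $\Theta$ of infinite subsets of $\mathbb{N}$ such that $A\setminus B$ and $B\setminus A$ are both infinite whenever $A\neq B$ (an almost-disjoint family of infinite sets does the job, since then $A\setminus B$ is cofinite in $A$), and set $\Gamma=\{S(A):A\in\Theta\}$ with $S(A)=\bigcup_{n\in A}S_n$. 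Since the $S_n$ are pairwise disjoint, $S(A)\setminus S(B)=S(A\setminus B)$, and one reads off $\ud(S(A\setminus B))=\alpha$ along the subsequence $b_{2n}$, $n\in A\setminus B$. No second layer, no $\omega$, no dyadic splitting.
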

\begin{proof}Let $\alpha=\ud(S)>0$.
There exists an increasing sequence of positive integers $b_1<b_2<\ldots$ such that
\[
\lim_{n\to\infty} \frac{1}{b_n}\#\{1\le j \le b_n: j\in S\}=\alpha.
\]
Without loss of generality we may assume that $n\cdot b_n\le b_{n+1}$ for all $n\in\mathbb{N}$.
For $n\in\mathbb{N}$ let
\[
S_n=(b_{2n-1},b_{2n}]\cap S \qquad\text{and}\qquad S_0=\bigcup_{n=1}^\infty S_n.
\]
Since $(b_{2n},b_{2n+1}]\subset \mathbb{N}\setminus S_0$ for each $n$ we have
\[
\frac{1}{b_{2n+1}}\#\{1\le j \le b_{2n+1}: j\notin S_0\}\ge \frac{b_{2n+1}-b_{2n}}{b_{2n+1}}\ge 1-\frac{1}{2n},
\]
and therefore \eqref{cond:a} holds. Note that
\[
\frac{1}{b_{2n}}\#\{1\le j \le b_{2n}: j\in S_0\}\ge \frac{\#\{1\le j \le b_{2n}: j\in S\}}{b_{2n}}\-\frac{b_{2n-1}}{b_{2n}},
\]
hence if $A$ is an infinite set of positive integers then
\[
\ud(S(A))=\ud(S),\qquad\text{where }S(A)=\bigcup_{n\in A} S_{n}.
\]
To finish the proof it is enough to observe that there exists an uncountable family $\Theta$ of
infinite sets of positive integers such that for any $A,B\in\Theta$ with $A\neq B$ the sets
$A\setminus B$ and $B\setminus A$ are infinite.
\end{proof}

\begin{lemma}\label{lem:dc-construction}
Let $X\subset\Omega_n$ be a hereditary subshift.
If  $x$ and $y$ is a pair of points in $X$ such that $F_{xy}(s)<1$ for some $s>0$, then
there exists an uncountable set $\Gamma\subset X$ such that for every $u,v\in\Gamma$, $u\neq v$ we have
\begin{enumerate}
  \item $F^*_{uv}(t)=1$ for all $t\in (0,\diam X]$,
  \item $F_{uv}(s)=F_{xy}(s)<1$.
\end{enumerate}
In particular, any pair $(u,v)$ with $u\neq v$ is DC$2$-scrambled, (DC$1$-scrambled, if in addition we have $F_{xy}(s)=0$).
\end{lemma}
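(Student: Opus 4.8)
The plan is to reduce the statement to a combinatorial construction on index sets and then feed it through the two preparatory lemmas. First I would apply Lemma~\ref{lem:densities-and-distances}: since $F_{xy}(s)<1$ for some $s>0$, the set $S=\Diff(x,y)=\{n:x_n\neq y_n\}$ has positive upper density, say $\ud(S)=\alpha>0$. The idea is to build the uncountable scrambled family inside $X$ by ``thinning'' $x$ (or rather $y$) along an uncountable family of index sets supplied by Lemma~\ref{lem:set-theory}, using heredity to guarantee that the resulting points stay in $X$.

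Concretely, apply Lemma~\ref{lem:set-theory} to $S$ to obtain $S_0\subset S$ satisfying the thickness-of-complement condition \eqref{cond:a} and an uncountable family $\Gamma_0$ of subsets of $S_0$ with $\ud(S'\setminus S'')=\ud(S''\setminus S')=\ud(S)$ for distinct $S',S''\in\Gamma_0$. For each $W\in\Gamma_0$ define a point $z^W\in\Omega_n$ by $z^W_n=x_n$ if $n\notin W$ and $z^W_n=\min(x_n,y_n)$ (coordinatewise minimum) for $n\in W$; since $W\subset S_0\subset\Diff(x,y)$, on $W$ exactly one of $x_n,y_n$ is changed to the smaller value and we have $z^W\le x$ coordinatewise, so heredity gives $z^W\in X$. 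Set $\Gamma=\{z^W:W\in\Gamma_0\}$, which is uncountable because $W\mapsto z^W$ is injective (distinct $W$ differ, and they differ on $S_0\subset\Diff(x,y)$ where the modification is detectable). Take $u=z^{W'}$, $v=z^{W''}$ with $W'\neq W''$.

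For the proximality/equality statement, I would check: the set where $u$ and $v$ agree contains $\mathbb{N}\setminus(W'\bigtriangleup W'')\supset\mathbb{N}\setminus S_0$, and by \eqref{cond:a} the set $\{n:\{n,\dots,n+k-1\}\subset\mathbb{N}\setminus S_0\}$ has upper density one for every $k$, hence $\{n:u_{[n,n+k]}=v_{[n,n+k]}\}$ has upper density one, so by Lemma~\ref{lem:densities-and-distances}\eqref{dd2b}$\Leftrightarrow$\eqref{dd2a} we get $F^*_{uv}(t)=1$ for all $t\in(0,\diam X]$. For the lower distribution I want $F_{uv}(s)=F_{xy}(s)$: the point is that $\Equal(u,v)$ and $\Equal(x,y)$ differ only on $S_0$, and more carefully one shows $\Diff(u,v)$ and $\Diff(x,y)$ differ by a set of density zero $-$ actually the natural route is that $\{n:u_{[n,n+k]}=v_{[n,n+k]}\}$ and $\{n:x_{[n,n+k]}=y_{[n,n+k]}\}$ differ by a set whose complement is negligible in the relevant $\limsup$/$\liminf$ sense along blocks, again using \eqref{cond:a}; then Lemma~\ref{lem:densities-and-distances} translates this back to equality of the distribution functions at $s$. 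If $F_{xy}(s)=0$ as well, then $F_{uv}(s)=0$, so the pair is DC$1$-scrambled; otherwise $F_{uv}(s)=F_{xy}(s)<1$ gives DC$2$-scrambled.

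The main obstacle I expect is the bookkeeping in the second item: one must be careful that replacing $x$ by $z^W$ only on a subset of $\Diff(x,y)$ genuinely leaves the value of $F_{xy}(s)$ unchanged, and that the symmetric difference of the ``agreement-of-length-$k$-blocks'' sets for $(u,v)$ versus $(x,y)$ is controlled by $S_0$, whose complement is thick with density one by \eqref{cond:a}. This is precisely where the strength of Lemma~\ref{lem:set-theory} (not merely density of $S_0$ but the density-one thickness of its complement) is used, so the argument should be phrased to invoke \eqref{cond:a} at exactly this point; everything else is routine translation via Lemma~\ref{lem:densities-and-distances} and heredity.
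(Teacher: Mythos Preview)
Your overall plan—feed an appropriate index set through Lemma~\ref{lem:set-theory} and translate via Lemma~\ref{lem:densities-and-distances}—is the same as the paper's. But your concrete construction of $z^W$ has a gap. You set $z^W_n=\min(x_n,y_n)$ for $n\in W$ and claim the modification is detectable because $W\subset\Diff(x,y)$. That is not enough: at indices with $x_n<y_n$ one has $\min(x_n,y_n)=x_n$, so $z^W_n=x_n$ whether or not $n\in W$. Hence $\Diff(z^{W'},z^{W''})=(W'\triangle W'')\cap\{n:x_n>y_n\}$, the map $W\mapsto z^W$ need not be injective, and $\ud(\Diff(u,v))$ can be zero if $\Diff(x,y)$ is concentrated on $\{n:x_n<y_n\}$. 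The repair is routine: first swap $x$ and $y$ if necessary so that $\{n:x_n>y_n\}$ has positive upper density, and apply Lemma~\ref{lem:set-theory} to \emph{that} set rather than to all of $\Diff(x,y)$; then your $z^W$ works as intended.

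The paper avoids this wrinkle by a blunter reduction: from $\ud(\Diff(x,y))>0$ it passes (via heredity) to a $\{0,1\}$-valued point with $\ud(\ones(x))>0$, and takes $\Gamma$ to consist of the characteristic functions of the subsets of $S=\ones(x)$ supplied by Lemma~\ref{lem:set-theory}. Your sketch for item~(2) is also off: the assertion that $\Diff(u,v)$ and $\Diff(x,y)$ differ by a set of density zero is false (already $S\setminus S_0$ has upper density $\ud(S)>0$ in the construction of Lemma~\ref{lem:set-theory}). Both your outline and the paper are terse here; the honest mechanism is not a density-zero perturbation but the fact that for $n$ in the relevant infinite index set the symmetric difference $S'\triangle S''$ coincides with $S$ on the whole block $(b_{2n-1},b_{2n}]$, while $b_{2n-1}/b_{2n}\to 0$, so along this subsequence the block-agreement counts for $(u,v)$ and for $(x,y)$ match up to $o(b_{2n})$.
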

\begin{proof}
Let $(x,y)$ be a pair of points such that $F_{xy}(t)<1$ for some $t>0$.
By Lemma \ref{lem:densities-and-distances}(1) we get that $\ud(\{n:x_n\neq y_n\})>0$.
Since $X$ is hereditary without loss
of generality we may assume that $\ud(\ones(x))>0$. With the customary abuse of notation,
we let $\Gamma$ to be the set of characteristic functions of subsets of $S=\ones(x)$
provided by Lemma~\ref{lem:set-theory}. Now, we apply both parts of Lemma~\ref{lem:densities-and-distances}
to see that each pair of different points of $\Gamma$ fulfills the desired conditions.
\end{proof}

\begin{theorem}\label{thm:dc2}
Let $X \subset\Omega_n$ be a hereditary subshift. Then the following conditions are equivalent
\begin{enumerate}
\item \label{dca} The topological entropy of $X$ is positive.
\item \label{dcb} There exists points $x,y\in X$ such that $F_{xy}(t)<1$ for some $t>0$.
\item \label{dcc} $X$ is DC$3$-chaotic.
\item \label{dcd} $X$ is DC$2$-chaotic.
\end{enumerate}
\end{theorem}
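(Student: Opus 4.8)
The plan is to establish the cyclic chain of implications $\eqref{dca}\Rightarrow\eqref{dcd}\Rightarrow\eqref{dcc}\Rightarrow\eqref{dcb}\Rightarrow\eqref{dca}$, since almost all of the substance is already packaged in Lemma~\ref{lem:dc-construction}, Lemma~\ref{lem:densities-and-distances}, and Theorem~\ref{thm:hereditary-entropy-density}. Two of the four arrows are purely formal and use only the monotonicity of the distribution functions. For $\eqref{dcd}\Rightarrow\eqref{dcc}$: if $F^*_{xy}(t)=1$ for all $t\in(0,\diam X]$ and $F_{xy}(s)<1$ for some $s>0$, then, since $F_{xy}$ is nondecreasing, $F_{xy}(t)\le F_{xy}(s)<1=F^*_{xy}(t)$ for every $t\in(0,s]$, so the same uncountable scrambled set witnesses DC$3$-chaos. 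For $\eqref{dcc}\Rightarrow\eqref{dcb}$: if $(x,y)$ is DC$3$-scrambled then $F_{xy}<F^*_{xy}$ on an interval of positive length; as both functions vanish on $(-\infty,0]$, that interval must contain a point $t_0>0$, at which $F_{xy}(t_0)<F^*_{xy}(t_0)\le 1$, so $\eqref{dcb}$ holds.

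For $\eqref{dca}\Rightarrow\eqref{dcd}$ I would argue as follows. Assuming $h(X)>0$, Theorem~\ref{thm:hereditary-entropy-density} produces a point with $BD^*(\ones(\cdot))>0$, and Corollary~\ref{cor:ad-ones} then upgrades this to a point $y\in X$ with $\ad(\ones(y))>0$, in particular $\ud(\ones(y))>0$. Since $0^\infty\in X$ by Lemma~\ref{lemma:basic}(1), and $\Diff(y,0^\infty)\supseteq\ones(y)$, we get $\ud(\Diff(y,0^\infty))>0$, so Lemma~\ref{lem:densities-and-distances}(1) yields $F_{y,0^\infty}(s)<1$ for some $s>0$. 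Feeding the pair $(y,0^\infty)$ into Lemma~\ref{lem:dc-construction} produces an uncountable set $\Gamma\subset X$ all of whose distinct pairs are DC$2$-scrambled, i.e.\ $X$ is DC$2$-chaotic.

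The remaining arrow $\eqref{dcb}\Rightarrow\eqref{dca}$ is the only genuinely non-formal step besides the appeal to Theorem~\ref{thm:hereditary-entropy-density}, and it is where one must be careful. If $F_{xy}(t)<1$ for some $t>0$, then Lemma~\ref{lem:densities-and-distances}(1) gives $\ud(\Diff(x,y))>0$. Every $n\in\Diff(x,y)$ has $x_n\ne 0$ or $y_n\ne 0$, so $\Diff(x,y)\subseteq\{n:x_n\ne 0\}\cup\{n:y_n\ne 0\}$, and subadditivity of $\ud$ forces one of these sets --- say $\{n:x_n\ne 0\}$ --- to have positive upper density, hence positive upper Banach density. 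As $x\in X$ and $X$ is hereditary, Lemma~\ref{lemma:basic}\eqref{cond:basic4} then guarantees a point $\omega\in X$ with $BD^*(\ones(\omega))>0$, and Theorem~\ref{thm:hereditary-entropy-density} gives $h(X)>0$. The only real obstacle is bookkeeping with the three flavours of density (asymptotic, upper, upper Banach) so that each quoted result is applied to the variant it actually requires; the mathematical content is entirely carried by the lemmas already proved.
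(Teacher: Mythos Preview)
Your proof is correct and follows essentially the same route as the paper's, which also establishes the equivalence of \eqref{dcb}--\eqref{dcd} via Lemma~\ref{lem:dc-construction} and then links \eqref{dca} to the existence of a point with positive density of $1$'s, pairing it with $0^\infty$. The only differences are cosmetic: you cite Theorem~\ref{thm:hereditary-entropy-density} together with Corollary~\ref{cor:ad-ones}, whereas the paper invokes the packaged form Theorem~\ref{thm:ergodic-equivalences}; and you spell out the step $\eqref{dcb}\Rightarrow\eqref{dca}$ more explicitly (via subadditivity of $\ud$ on $\Diff(x,y)\subseteq\{n:x_n\neq0\}\cup\{n:y_n\neq0\}$ and Lemma~\ref{lemma:basic}\eqref{cond:basic4}) than the paper's terse proof text does, though the paper's remark after the theorem singles out exactly this implication as the one specific to hereditary shifts.
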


\begin{proof} On account of Lemma \ref{lem:dc-construction} conditions (\ref{dcb}-\ref{dcd}) are equivalent.
By Theorem~\ref{thm:ergodic-equivalences} positive entropy of $X$ is equivalent to the existence of a point
$x\in X$ with $\ad(\ones(x))>0$. Now we may consider a pair $(x,y)$ where $y=0^\infty$,
and apply Lemma \ref{lem:dc-construction}
to finish the proof.
\end{proof}
Note that the implications $\eqref{dca}\implies\eqref{dcd}\implies\eqref{dcc}\implies\eqref{dcb}$ of the theorem above
also hold for general dynamical systems, and are trivial, except
$\eqref{dca}\implies\eqref{dcd}$, which had been a longstanding open problem
solved recently by Downarowicz in \cite{D}.
The only implication specific for hereditary shifts is $\eqref{dcb}\implies\eqref{dca}$.

\begin{theorem}\label{thm:dc1}
A hereditary shift $X\subset\Omega_n$ is DC$1$-chaotic  if and only if $X$ is not proximal.
\end{theorem}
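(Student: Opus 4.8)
The plan is to prove both directions separately, using the characterization of proximality already recorded in the proof of Theorem~\ref{thm:ergodic-equivalences2}: a hereditary shift $X$ is proximal if and only if $0^\infty$ is its unique minimal point, which happens exactly when $X$ has no minimal subsystem other than $\{0^\infty\}$. The easier direction is that DC$1$-chaos implies non-proximality. If $X$ is proximal, then for every $x\in X$ the pair $(x,0^\infty)$ is proximal, so $\liminf_{n\to\infty}\rho(\sigma^n(x),0^\infty)=0$; by heredity one in fact expects every point of $X$ to be proximal to $0^\infty$ in a strong way (its orbit repeatedly visits arbitrarily small neighborhoods of $0^\infty$), so for any two points $x,y$ the orbit of the coordinatewise maximum — or more carefully, of suitable points dominating both $x$ and $y$ — forces $\liminf_n \rho(\sigma^n x,\sigma^n y)=0$ along a common subsequence. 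The cleanest route is to observe that for a hereditary proximal shift $X$, $\adone(X)=0$ (since otherwise Theorem~\ref{thm:zero-density-zero-entropy} and the earlier equivalences would produce a nontrivial invariant measure and hence a minimal set $\neq\{0^\infty\}$), so by Theorem~\ref{thm:ergodic-equivalences2}, $\mathbb{N}\setminus\ones(x)$ is thick for every $x\in X$. I would then argue that thickness of the zero-sets of all points prevents a DC$1$-scrambled pair: for any pair $(x,y)$, the set $\Equal(x,y)\supset (\mathbb{N}\setminus\ones(x))\cap(\mathbb{N}\setminus\ones(y))$, and while this intersection need not be thick, one can still show $F_{xy}(s)=0$ fails for the relevant $s$ because the ``large'' common-zero blocks force $F^*_{xy}(t)$ and $F_{xy}(t)$ to be controlled together. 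Actually the sharper and simpler observation is: in a proximal hereditary shift there are no DC$1$ pairs because a DC$1$ pair requires $F_{xy}(s)=0$ for some $s>0$, i.e.\ (by Lemma~\ref{lem:densities-and-distances}) $\ud(\Equal(x,y))=0$, hence $\ld(\Diff(x,y))=1$, and I would derive from this a point in $X$ whose $1$-set has positive lower (hence upper Banach) density, contradicting $\adone(X)=0$.

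For the converse — non-proximal implies DC$1$-chaotic — I would use that $X$ non-proximal means there is a minimal subsystem $M\subset X$ with $M\neq\{0^\infty\}$. Pick any $m\in M$, $m\neq 0^\infty$; then by minimality (Theorem~\ref{thm:minimality}) the set $N(m,[1]_M)$ of times the orbit of $m$ shows a $1$ in the first coordinate is syndetic, so $\ones(m)$ is syndetic and in particular $\ubd(\ones(m))>0$. The plan is then to build a DC$1$-scrambled uncountable set inside $X$ by combining $m$ with the construction of Lemma~\ref{lem:set-theory}/Lemma~\ref{lem:dc-construction}, but adapted to produce $F_{uv}(s)=0$ rather than merely $F_{uv}(s)<1$. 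Concretely: since $\ones(m)$ is syndetic with gap bound $g$, I would apply (a variant of) Lemma~\ref{lem:set-theory} to $S=\ones(m)$ to get $S_0\subseteq S$ with the property that for every $k$, $\ud\{n:\{n,\ldots,n+k-1\}\subset\mathbb{N}\setminus S_0\}=1$, together with an uncountable family $\Gamma$ of subsets $S'$ of $S_0$ with $\ud(S'\setminus S'')=\ud(S''\setminus S')=\ud(S)$ for distinct members. By heredity the characteristic functions $\chi_{S'}$ lie in $X$ (each $S'\subseteq\ones(m)$), and for $u=\chi_{S'}$, $v=\chi_{S''}$ distinct the second part of Lemma~\ref{lem:densities-and-distances} gives $F^*_{uv}(t)=1$ for all $t$. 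The missing ingredient for DC$1$ is $F_{uv}(s)=0$ for some $s>0$, equivalently (Lemma~\ref{lem:densities-and-distances}(1)) $\ud(\Diff(u,v))$ should be replaced by: for some $k$, $\{n:u_{[n,n+k]}=v_{[n,n+k]}\}$ does \emph{not} have density bounded below, i.e.\ $\ld(\Diff(u,v))$ large — no, rather $F_{uv}(s)=0$ means $\{n:\rho(\sigma^nu,\sigma^nv)<s\}$ has \emph{lower} density $0$, i.e.\ $\Diff(u,v)$ read at the appropriate scale has \emph{upper} density $1$. Here is where the syndeticity of $\ones(m)$ is essential: I would instead take the $S'$ not inside $S_0$ but arrange, using the block structure, that $S'\triangle S''$ is not just of positive upper density but \emph{thick} relative to the blocks where both are ``on'', so that $\Diff(u,v)$ is thick, forcing $F_{uv}(s)=0$.

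A cleaner way to organize the converse, which I would actually adopt: non-proximality gives a minimal point $m\neq 0^\infty$, hence (since $X$ is hereditary and the orbit closure of $m$ contains points obtained by ``thinning'' $m$) one can find two points $p,q\in X$ with $\ones(p),\ones(q)$ disjoint, $\ones(p)\cup\ones(q)=\ones(m)$ syndetic, and with $\ones(p)$ (equivalently $\ones(q)$) \emph{thick}-ly spread in the sense that $\Diff(p,q)=\ones(p)\cup\ones(q)$ is syndetic — wait, syndetic is not enough for $F=0$. The genuine point is that for a DC$1$ pair we need the \emph{agreement} set to have upper density bounded away from $1$ along \emph{every} window but with a zero-density \emph{lower} bound, i.e.\ $\ld\{n:\sigma^nu,\sigma^nv \text{ agree on a long block}\}=0$; this is exactly achieved by choosing, inside the uncountable antichain $\Theta$ of Lemma~\ref{lem:set-theory}, index sets $A,B$ whose symmetric difference is not merely infinite but has a subsequence of \emph{consecutive-enough} blocks, making $\Diff(\chi_{S(A)},\chi_{S(B)})$ thick. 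The main obstacle, and the step I expect to cost real work, is precisely this upgrade of Lemma~\ref{lem:set-theory}: producing an uncountable family with $\ud(S'\setminus S'')$ replaced by a \emph{thickness} condition on the symmetric differences — one wants $\{n:[n,n+k]\subset S'\setminus S''\text{ or }[n,n+k]\subset S''\setminus S'\}$ to be nonempty for all $k$, or more strongly to have upper density $1$ — while keeping every $S'$ a subset of the syndetic set $\ones(m)$ so that $\chi_{S'}\in X$. Everything else (invoking Theorem~\ref{thm:minimality} for syndeticity, Lemma~\ref{lem:densities-and-distances} to translate densities into distribution functions, heredity to land the constructed points in $X$, and the easy direction via $\adone(X)=0$ and Theorem~\ref{thm:ergodic-equivalences2}) is routine. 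So the proof splits as: (i) proximal $\Rightarrow$ $\adone(X)=0$ $\Rightarrow$ no DC$1$ pair; (ii) non-proximal $\Rightarrow$ a syndetic $1$-set $\Rightarrow$ a DC$1$-scrambled Cantor set via a thickened version of the Lemma~\ref{lem:set-theory} construction.
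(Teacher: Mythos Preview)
Your proposal contains a genuine error in the direction ``proximal $\Rightarrow$ not DC$1$''. You claim that a proximal hereditary shift must have $\adone(X)=0$, arguing that otherwise a nontrivial invariant measure would exist and hence a minimal set other than $\{0^\infty\}$. But a nontrivial invariant measure does \emph{not} force a second minimal set: this is precisely the content of Theorem~\ref{thm:kriz}, which exhibits a proximal spacing shift with positive entropy (hence $\adone(X)>0$). In the paper's taxonomy, you are asserting that class (\ref{class:II}\ref{class:A}) is empty, while the paper devotes Theorem~\ref{thm:kriz} to showing it is not. Your subsequent argument (deriving from a DC$1$ pair a point with $1$'s of positive density, contradicting $\adone(X)=0$) therefore collapses. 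The paper instead disposes of this direction in one line by citing \cite[Corollary~15]{O}: proximal systems never contain DC$1$-scrambled pairs, a general fact about dynamical systems that does not use heredity at all.

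For the converse direction your instincts are right but you are working far too hard. You correctly pick a minimal point $m\neq 0^\infty$ and note that $\ones(m)$ is syndetic with some gap bound $k$. What you miss is that this already gives a pair with $F_{xy}(s)=0$: take $x=m$ (shifted so that $x\in[\alpha]$ for some $\alpha\neq 0$) and $y=0^\infty$; syndeticity with gap $k$ means \emph{every} window $x_{[j,j+k)}$ contains a nonzero symbol, so $\rho(\sigma^j x,\sigma^j y)\ge n^{-k}$ for all $j$, whence $F_{xy}(n^{-k})=0$ outright. Now Lemma~\ref{lem:dc-construction} applies verbatim --- read its last sentence: when the input pair satisfies $F_{xy}(s)=0$, the output uncountable set is DC$1$-scrambled, not merely DC$2$. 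There is no need to ``thicken'' Lemma~\ref{lem:set-theory} or to engineer symmetric differences with stronger density properties; the existing construction already preserves $F_{uv}(s)=F_{xy}(s)=0$.
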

\begin{proof}
If $\omega=(\omega_i)\neq 0^\infty$ is a minimal point, then $x=\sigma^\nu(\omega)\in [\alpha]$ for some $\nu\ge 0$
and $\alpha\in\Lambda_n\setminus\{0\}$. 
Moreover, $x$ is also a minimal point of $X$, and hence it returns to the cylinder $[\alpha]$ syndetically often,
that is, there is $k>0$ such that $x_{[j,j+k)}\neq 0^k$ for each $j\in\mathbb{N}$. Let $y=0^\infty$. Therefore
$(x,y)$ is a pair such that $F_{xy}(2^{-k})=0$. We conclude from Lemma \ref{lem:dc-construction} that there
must be an uncountable DC$1$-chaotic set in $X$. For the other direction, note that
by \cite[Corollary 15]{O} the DC$1$-scrambled pairs are absent in any proximal system. Hence, DC$1$-chaos implies existence
of a minimal set other than $0^\infty$.
\end{proof}

The following theorem completes our answer to \cite[Questions 4 and 5]{spacing}. Note that such a subshift
we obtain by this theorem has an invariant measure supported outside minimal sets. The first example of
this phenomenon was given by Goodwyn in \cite{G}. Here, following \cite{McC}
by an \emph{$r$-coloring} of $\mathbb{N}$ we mean any partition $\mathbb{N}=C_1\cup\ldots\cup C_r$. 
The indices $1,2,\ldots,r$ are called the \emph{colors}. A set $E\subset\mathbb{N}$ is said to 
be \emph{$r$-intersective} if for every $r$-coloring $\mathbb{N}=C_1\cup\ldots\cup C_r$ there 
exists a color $i$ such that $(C_i-C_i)\cap E$ is non-empty. We say that $E\subset \mathbb{N}$ 
is \emph{chromatically intersective} if $E$ is $r$-intersective for any $r\ge 1$. 
By \cite[Proposition 0.12]{McC} a set $E$ is chromatically intersective if and only if whenever
$(X,f)$ is a dynamical system, $x\in X$ is a minimal point and $U\subset X$ is an open neighborhood 
of $x$ then there is $n\in E$ such that
$U\cap f^{-n}(U)$ is non-empty.

\begin{theorem}\label{thm:kriz}
There exits a weakly mixing and proximal spacing shift $(\Omega_P,\sigma_P)$ with positive topological entropy.
Hence, there is a DC$2$-chaotic spacing shift which is not DC$1$-chaotic.
\end{theorem}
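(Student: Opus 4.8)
The plan is to build $P$ from a combinatorial object guaranteeing both weak mixing (i.e. thickness of $P$) and positive entropy (i.e. the existence of a point of $\Omega_P$ with positive upper Banach density of $1$'s), while keeping $\Omega_P$ proximal (i.e. forcing $0^\infty$ to be its unique minimal point). By Theorem~\ref{thm:hereditary-entropy-density} positive entropy of $\Omega_P$ is equivalent to the existence of $A\subset\mathbb N$ with $\ubd(A)>0$ and $A-A\subset P$; so I want $P$ to contain a difference set of a Banach-dense set. By Theorem~\ref{thm:ergodic-equivalences2}, proximality of $\Omega_P$ follows once I show $\Omega_P$ has no minimal point other than $0^\infty$. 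Using Theorem~\ref{thm:minimality}, a nonzero minimal point of $\Omega_P$ would give, after a shift, a point $x\in[1]_P$ with $N(x,[1]_P)$ syndetic; since $N([1]_P,[1]_P)=P$, this produces a syndetic set $C\subset\mathbb N$ with $C-C\subset P$. Thus proximality of $\Omega_P$ is \emph{equivalent} to: no syndetic set has its difference set inside $P$, equivalently $\mathbb N\setminus P$ meets every difference set of a syndetic set — and in fact, since syndetic sets are exactly the "one-color" pieces one can always extract, the right notion is that $\mathbb N\setminus P$ be chromatically intersective. So the goal reduces to: \emph{find a thick set $P$ such that $P$ contains $A-A$ for some $A$ with $\ubd(A)>0$, yet $\mathbb N\setminus P$ is chromatically intersective.}

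This is exactly what the K\v r\'\i\v z construction (as refined by McCutcheon using Ruzsa's idea, cited as \cite{Kriz,McC}) supplies: there is a set $S\subset\mathbb N$ with $\ubd(S)>0$ such that $S-S$ is chromatically intersective (a "density-intersective but not chromatically-trivial" phenomenon). First I would set $P=\mathbb N\setminus(S-S)$, where $S$ is the K\v r\'\i\v z--McCutcheon set. Second, I would verify that $P$ is thick, hence $\Omega_P$ is weakly mixing: since $\mathbb N\setminus P=S-S$ is chromatically intersective it is in particular a set of recurrence, and a standard fact (or a direct syndeticity argument) shows such a set cannot be syndetic; one wants slightly more, namely that its complement $P$ is thick — this holds because a chromatically intersective set, being a set of recurrence, has complement that is not "thick-complemented", i.e. $P$ meets every thick set, but what we actually need is that $P$ itself is thick. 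I would instead argue thickness directly from the K\v r\'\i\v z construction, which is built so that $S-S$ omits arbitrarily long intervals (this is in fact how one sees $S-S$ is not cofinite and $\Omega_P$ is non-trivial); alternatively appeal to the explicit structure in \cite{McC}. Third, for proximality: suppose $\Omega_P$ had a nonzero minimal point; by the reduction above we would obtain a syndetic $C$ with $C-C\subset P=\mathbb N\setminus(S-S)$, i.e. $(C-C)\cap(S-S)=\emptyset$. But a syndetic set is one cell of a finite partition of $\mathbb N$ into syndetic pieces, so chromatic intersectivity of $S-S$ forces $(C-C)\cap(S-S)\neq\emptyset$ — contradiction. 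Hence $0^\infty$ is the only minimal point and $\Omega_P$ is proximal by Theorem~\ref{thm:ergodic-equivalences2}. Fourth, for positive entropy: $S$ itself witnesses $\ubd(\ones(\omega_S))>0$ with $\omega_S\in\Omega_P$ because $S-S\subset P$ trivially (it equals $\mathbb N\setminus P$'s... wait — here $P=\mathbb N\setminus(S-S)$, so $S-S\not\subset P$!).

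I need to fix the previous paragraph's sign: the correct choice is to let $P$ itself \emph{contain} $S-S$, not avoid it. So: let $S$ be the K\v r\'\i\v z--McCutcheon set, with $\ubd(S)>0$ and $S-S$ chromatically intersective; the construction also arranges that $\mathbb N\setminus(S-S)$ is thick (equivalently $S-S$ misses arbitrarily long intervals). Set $P=\mathbb N\setminus(\text{a thick set inside }\mathbb N\setminus(S-S))$ — concretely, take $P$ to be any thick superset of $S-S$ whose complement is still chromatically intersective; the cleanest route, and the one I expect the paper takes, is: the K\v r\'\i\v z example is really a thick set $P\supset S-S$ with $\mathbb N\setminus P$ chromatically intersective, produced by thickening $S-S$ a little while keeping the complement intersective (Ruzsa's refinement gives this flexibility). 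Then: (i) $P$ thick $\Rightarrow$ $\Omega_P$ weakly mixing; (ii) $S-S\subset P$ and $\ubd(S)>0$ $\Rightarrow$ by Lemma~\ref{lem:positive-ubd-positive-entropy} (via $\omega_S\in\Omega_P$), $h(\Omega_P)>0$; (iii) $\mathbb N\setminus P$ chromatically intersective $\Rightarrow$ no syndetic $C$ has $C-C\subset P$ $\Rightarrow$ $\Omega_P$ has no nonzero minimal point $\Rightarrow$ $\Omega_P$ proximal. Finally, combining (i)--(iii) with Theorem~\ref{thm:dc2}, positive entropy of the hereditary shift $\Omega_P$ gives DC$2$-chaos, while proximality together with Theorem~\ref{thm:dc1} (or directly \cite[Corollary 15]{O}, that proximal systems have no DC$1$-scrambled pairs) shows $\Omega_P$ is not DC$1$-chaotic. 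The main obstacle — and the only genuinely non-elementary input — is invoking the K\v r\'\i\v z/McCutcheon construction correctly: extracting from it a single thick set $P$ that simultaneously contains a difference set of a Banach-dense set and has chromatically intersective complement; once that object is in hand, the dynamical translation via Theorems~\ref{thm:minimality}, \ref{thm:hereditary-entropy-density}, \ref{thm:ergodic-equivalences2}, \ref{thm:dc2} and \ref{thm:dc1} is routine.
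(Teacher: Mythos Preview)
Your overall strategy is the paper's: pull $P$ out of the K\v{r}\'{\i}\v{z}/McCutcheon construction, then read off weak mixing from thickness, positive entropy from Theorem~\ref{thm:hereditary-entropy-density}, proximality from chromatic intersectivity of $\mathbb N\setminus P$, and the DC$2$/not-DC$1$ dichotomy from Theorems~\ref{thm:dc2} and~\ref{thm:dc1}. But you have the K\v{r}\'{\i}\v{z} input backwards, and that is a real gap. The result is \emph{not} ``there exists $S$ of positive upper Banach density with $S-S$ chromatically intersective''. The actual statement (\cite[Theorem~1.2]{McC}) is: there exist a set $A$ with $\ud(A)>0$ and a chromatically intersective set $C$ with $(A-A)\cap C=\emptyset$. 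One then sets $P=\mathbb N\setminus C$ directly. This gives $A-A\subset P$, so the characteristic function of $A$ lies in $\Omega_P$ and $h(\Omega_P)>0$; and $\mathbb N\setminus P=C$ is chromatically intersective, which yields proximality. There is no step of ``thickening $S-S$ while keeping the complement intersective'' --- you cannot justify such a thickening without already having K\v{r}\'{\i}\v{z} in its correct form. Your target list (i)--(iii) is right, but the object hitting it is $P=\mathbb N\setminus C$, not a modified $S-S$.

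Two smaller points. Your proximality reduction (nonzero minimal point $\Rightarrow$ syndetic $C'$ with $C'-C'\subset P$) is correct, but the justification ``a syndetic set is one cell of a finite partition, so chromatic intersectivity forces $(C'-C')\cap(\mathbb N\setminus P)\neq\emptyset$'' is incomplete: chromatic intersectivity only guarantees the conclusion for \emph{some} cell of the coloring, not the one you chose. The fix is to color $\mathbb N$ so that every cell lies in a translate of $C'$ (possible since $C'$ has bounded gaps), whence every cell's difference set sits inside $C'-C'$. The paper sidesteps this by using the dynamical characterization of chromatic intersectivity (\cite[Proposition~0.12]{McC}) directly: applied with $U=[1]_P$ at a minimal point, it produces $n\in C\cap N([1]_P,[1]_P)=(\mathbb N\setminus P)\cap P$, a contradiction. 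Finally, thickness of $P$ genuinely requires opening the construction: the paper notes that $C=\bigcup_j(m_1n_1\cdots m_{j-1}n_{j-1})\cdot C_j$ with the $C_j$ finite and the $n_j$ freely choosable, so one can force $\mathbb N\setminus C$ to be thick.
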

\begin{proof}By the result of K\v{r}\'{\i}\v{z} (proved first by \cite{Kriz}, here we use \cite[Theorem 1.2]{McC}) 
there exists a set $A\subset\mathbb{N}$
with $\ud(A)>0$ such that $(A-A)\cap C=\emptyset$ for some
chromatically intersective set $C$. Let $P=\mathbb{N}\setminus C$.

We claim that the spacing shift $\Omega_P$ is proximal, that is, we claim that $0^\infty$
is the unique minimal point of $\Omega_P$. Assume on contrary that there is another minimal point $\omega\in\Omega_P$. 
Then there is some $k\ge 0$ such that $[1]_P$ is an open neighborhood of a minimal point $\sigma^k(\omega)$. 
By \cite[Proposition 0.12]{McC} there must be $n\in N([1]_P,[1]_P)\cap C$, but this contradicts the definition of $P=\mathbb{N}\setminus C$. 
So $\Omega_P$ is proximal. 

Moreover, the characteristic function of the set $A$ belongs to $\Omega_P$, hence $h(\Omega_P)>0$, since $\ud(A)>0$. 
By Theorems \ref{thm:dc2} and \ref{thm:dc1} the spacing shift $\Omega_P$ is a DC$2$-chaotic but it is not DC$1$-chaotic. 
To prove that $\Omega_P$ is weakly mixing we need to show that $C$ can be chosen so that $P=\mathbb{N}\setminus C$ is thick. 
Since most of the construction of the set $C$ can be repeated without introducing anything new, we ask the reader to re-examine the proof of \cite[Theorem 1.2]{McC} to see that $C$ is defined as an union of finite sets
\[
C=C_1\cup (m_1n_1)\cdot C_2 \cup (m_1n_1m_2n_2)\cdot C_3\cup\ldots,
\]
where $c\cdot J=\{cj:j\in J\}$, and positive integers $n_1,n_2,\ldots$ can be chosen to be arbitrarily large. As all sets $C_1,C_2,\ldots$ are finite, and
do not depend on $n_i$'s, one can force $C$ to have thick complement.
\end{proof}

\section{Beta shifts are hereditary}\label{sec:beta}

We prove here that the very important class of beta shifts provides 
a whole family of examples of hereditary shifts.
We follow the description of beta shifts presented in \cite{Th}.
To define a beta shift fix a real number $\beta>1$ and
let the sequence $\omega^{(\beta)}\in \Omega_{\lceil\beta\rceil}$
be the expansion of $1$ in base $\beta$, that is,
\[
1=\sum_{i=1}^\infty \omega^{(\beta)}_i\beta^{-i}.
\]
Then $\omega^{(\beta)}\in \Omega_{\lceil\beta\rceil}$
is given by $\omega^{(\beta)}_1=\lfloor\beta\rfloor$ and
\[
\omega^{(\beta)}_i=
\left\lfloor\beta^i\left(
1-\omega^{(\beta)}_1\beta^{-1}-\omega^{(\beta)}_2\beta^{-2}-\ldots-\omega^{(\beta)}_{i-1}\beta^{-i+1}
\right)\right\rfloor.
\]
Let $\preceq$ denote the lexicographic ordering of the set $(\mathbb{N}\cup\{0\})^\mathbb{N}$. Then
it can be proved that for any $k\ge 0$ we have
\begin{equation}\label{eq:beta}
\sigma^k(\omega^{(\beta)})\preceq \omega^{(\beta)},
\end{equation}
where $\sigma$ denotes the shift operator on $(\mathbb{N}\cup\{0\})^\mathbb{N}$. 
By a result of Parry \cite{Parry}, the converse is also true,
that is, if any sequence over a finite alphabet satisfies the above equation then there is
a $\beta>1$ such that this sequence is a $\beta$-expansion of $1$. It follows from
\eqref{eq:beta} that
\[
\Omega_\beta=\{\omega\in\Omega_{\lceil\beta\rceil}:\omega_{[k,\infty)}\preceq \omega^{(\beta)}\text{ for all }k\ge 0\}
\]
is a subshift of $\Omega_{\lceil\beta\rceil}$, called the beta subshift defined by $\beta$.

It is easy to see that the above description of beta shifts implies the following:
\begin{lemma}
Every beta shift $\Omega_\beta\subset\Omega_{\lceil\beta\rceil}$ is hereditary.
\end{lemma}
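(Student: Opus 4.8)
The plan is to derive heredity directly from the lexicographic description
\[
\Omega_\beta=\{\omega\in\Omega_{\lceil\beta\rceil}:\sigma^k(\omega)\preceq \omega^{(\beta)}\text{ for all }k\ge 0\}
\]
obtained above, together with one elementary fact about the lexicographic order $\preceq$ on $(\mathbb{N}\cup\{0\})^{\mathbb{N}}$: if $a\le b$ coordinate-wise, then $a\preceq b$. To see this, observe that either $a=b$, or there is a least index $i$ with $a_i\neq b_i$; in the latter case $a_j=b_j$ for all $j<i$ while $a_i<b_i$ (since $a_i\le b_i$ and $a_i\neq b_i$), so $a\prec b$.

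With this in hand the deduction is short. Suppose $\omega\in\Omega_\beta$ and $\omega'\le\omega$ coordinate-wise. First note that $\omega'$ automatically takes values in the alphabet $\{0,1,\ldots,\lfloor\beta\rfloor\}$, since $\omega'_i\le\omega_i\le\lfloor\beta\rfloor$ for every $i$ and this symbol set is downward closed; thus $\omega'\in\Omega_{\lceil\beta\rceil}$. Now fix $k\ge 0$. The shift acts coordinate-wise, so $\omega'\le\omega$ gives $\sigma^k(\omega')\le\sigma^k(\omega)$ coordinate-wise, hence $\sigma^k(\omega')\preceq\sigma^k(\omega)$ by the fact above. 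Since $\omega\in\Omega_\beta$ we have $\sigma^k(\omega)\preceq\omega^{(\beta)}$, and transitivity of $\preceq$ yields $\sigma^k(\omega')\preceq\omega^{(\beta)}$. As $k$ was arbitrary, $\omega'\in\Omega_\beta$, which is exactly the heredity of $\Omega_\beta$.

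There is essentially no obstacle here: the entire content is the compatibility of the coordinate-wise order with the lexicographic order (immediate), plus the trivial remark that passing to a coordinate-wise smaller sequence keeps us inside the alphabet. Accordingly I would present the proof in exactly the two steps above — first the order-compatibility observation, then the one-line deduction from the lexicographic description of $\Omega_\beta$.
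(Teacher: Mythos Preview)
Your proof is correct and follows exactly the approach the paper intends: the paper states the lemma without a written proof, merely noting that ``it is easy to see that the above description of beta shifts implies'' heredity, and your argument spells out precisely this easy deduction from the lexicographic characterization of $\Omega_\beta$.
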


\section{Final remarks and an open problem}\label{sec:final}

Finally, we present an example, which shows that there exist hereditary shifts other than spacing shifts or beta shifts.

\begin{theorem}\label{thm:mix}
There exists mixing, hereditary binary subshift without any DC$3$-scrambled pair, which is not conjugated to any spacing shift, nor any beta shift.
\end{theorem}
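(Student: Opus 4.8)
The plan is to build $X$ by hand as a ``sparsity-constrained'' subshift and then harvest all the required properties from results already established. Fix a non-decreasing function $\phi\colon\mathbb{N}\to\mathbb{N}$ with $\phi(n)\to\infty$ and $\phi(n)/n\to 0$; for definiteness take $\phi(n)=\lfloor\sqrt{n}\,\rfloor$. Let
\[
\mathcal{W}=\bigl\{w\in\mathcal{L}(\Omega_2):\ \textstyle\sum u\le\phi(|u|)\text{ for every subword }u\text{ of }w\bigr\}.
\]
Then $\mathcal{W}$ is closed under passing to subwords, and, since $\phi$ is non-decreasing, $w\in\mathcal{W}$ implies $w0\in\mathcal{W}$ (adjoining a $0$ lengthens the new subwords without adding a $1$). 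By \cite[Proposition~1.3.4]{LM} the set $X=X_\mathcal{W}\subset\Omega_2$ is therefore a binary subshift with $\mathcal{L}(X)=\mathcal{W}$; it is hereditary, because lowering entries only decreases $\sum u$ on every subword, and it is infinite, since e.g. the sequence with $1$'s exactly at the positions $\{4k^2:k\ge1\}$ lies in $X$.

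Zero entropy and the absence of scrambled pairs are then automatic. Since every $w\in\mathcal{W}$ satisfies $\sum w\le\phi(|w|)$ we have $\dind_k(X,1)\le\phi(k)$, so $\adone(X)\le\lim_k\phi(k)/k=0$, and Theorem~\ref{thm:zero-density-zero-entropy} gives $h(X)=0$. As $X$ is hereditary, Theorem~\ref{thm:dc2} then shows that condition~(\ref{dcb}) fails, i.e. $F_{xy}(t)=1$ for every $t>0$ and all $x,y\in X$; hence $F_{xy}=F^*_{xy}$ for every pair, so $X$ has no DC$3$-scrambled pair at all. (By Theorem~\ref{thm:ergodic-equivalences2}, $X$ is moreover uniquely ergodic, so class~(\ref{class:I}\ref{class:A}) is nonempty.)

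The heart of the matter is that $X$ is topologically mixing. Given $u,v\in\mathcal{W}$, I claim $u0^{n}v\in\mathcal{W}$ whenever $\phi(n)\ge\sum u+\sum v$. Indeed, a subword of $u0^{n}v$ is a subword of $u$, a subword of $v$, or has one of the forms $u''0^{a}$, $0^{a}v'$, $u''0^{n}v'$, where $u''$ is a suffix of $u$, $v'$ a prefix of $v$, and $0\le a\le n$; in the first two mixed cases the number of its $1$'s equals $\sum u''\le\phi(|u''|)\le\phi(|u''|+a)$, respectively $\sum v'\le\phi(|v'|)\le\phi(a+|v'|)$ (using monotonicity of $\phi$), while in the last case it is at most $\sum u+\sum v\le\phi(n)\le\phi(|u''|+n+|v'|)$. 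Hence $u0^{n}v\in\mathcal{L}(X)$ for all large $n$, so $|u|+n\in N([u]_X,[v]_X)$ for all large $n$, and since the nonempty cylinders form a basis of $X$, Theorem~\ref{thm:transitivity} yields that $X$ is mixing. I expect this to be the main obstacle: the defining constraint must be tight enough to force zero density of $1$'s yet slack enough to allow both arbitrarily long all-$0$ bridges and points carrying arbitrarily many $1$'s, and it is exactly the monotonicity of $\phi$ that makes the ``adjoin a $0$'' and ``bridge by $0^{n}$'' arguments go through automatically (a bound that decreases across length thresholds, such as a step function on intervals $[K_j,K_{j+1})$, would not even define a subshift).

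It remains to exclude conjugacies. Topological entropy and topological mixing are conjugacy invariants, so it suffices to note that neither a spacing shift nor a beta shift can be at once mixing and of zero entropy. If $\Omega_P$ is mixing then $P=N([1]_P,[1]_P)$ is cofinite, say $\{M,M+1,\dots\}\subseteq P$, and the $2^{n}$ words of length $nM$ obtained by putting, independently in each of $n$ consecutive length-$M$ blocks, either $0^{M}$ or $10^{M-1}$ are all $P$-admissible (the distances between occurring $1$'s are multiples of $M$, hence in $P$); this gives $h(\Omega_P)\ge(\log 2)/M>0$. And every beta shift has positive entropy, $h(\Omega_\beta)=\log\beta>0$. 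Therefore $X$ is topologically conjugate to no spacing shift and to no beta shift, finishing the proof.
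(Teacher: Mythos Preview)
Your proof is correct and follows essentially the same route as the paper's: both constructions define $X$ via a sublinear cap on the number of $1$'s in every subword (the paper uses $\phi(n)\approx\log_2 n$, you use $\phi(n)=\lfloor\sqrt n\rfloor$), and both extract zero entropy, absence of DC$3$ pairs, and mixing from exactly the same mechanisms. Your write-up is more detailed in the mixing verification and gives a self-contained argument that mixing spacing shifts have positive entropy (the paper simply cites \cite{spacing} for this), but the underlying idea is identical.
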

\begin{proof} To specify $X$ we will describe the language of $X$. Let $\mathcal{W}$ be the collection of all $w$ words from $\lang(\Omega_2)$ such that
for any word $u$ occurring in $w$ if $2^{k-1}+1\le |u| \le 2^{k}$, then the symbol $1$ occurs at less than $k+1$ positions in $u$. It is clear that $\mathcal{W}$ fulfills the assumptions of \cite[Proposition 1.3.4]{LM}, and hence $X=X_\mathcal{W}$ is a binary subshift with $\mathcal{W}=\lang(X_{\mathcal{W}})$. Then clearly, $X$ is hereditary, and $\ad(\omega)=0$ for every $\omega\in X$, hence the topological entropy of $X$ is zero, and there is no DC$3$-scrambled pair in $X$. Now fix any two cylinders $[u]$ and $[v]$ in $X$. Since $u0^kv0^\infty\in X$ for all sufficiently large $k$, we conclude
by Theorem \ref{thm:transitivity} that $X$ is mixing.
It follows from  \cite{spacing} that all mixing spacing shifts have positive topological entropy.
On the other hand it is well known that the topological entropy of every beta shift is also positive.
Hence $X$ is not conjugated to any spacing shift nor beta shift.
\end{proof}

As the topological entropy of a beta shift $\Omega_\beta$ is $\log\beta$, using
the main result of \cite{D} or Theorem \ref{thm:dc2} we obtain that
every beta shift is DC$2$-chaotic. But actually more is true.

\begin{theorem}
Every beta shift is DC$1$ chaotic.
\end{theorem}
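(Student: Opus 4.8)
The plan is to apply Theorem~\ref{thm:dc1}, which asserts that a hereditary shift is DC$1$-chaotic precisely when it is not proximal, i.e. when it possesses a minimal subsystem other than $\{0^\infty\}$. Since every beta shift $\Omega_\beta$ is hereditary (by the Lemma in Section~\ref{sec:beta}), it suffices to exhibit a minimal point in $\Omega_\beta$ distinct from $0^\infty$, or equivalently, a nonempty closed invariant subset not equal to $\{0^\infty\}$ and disjoint from nowhere useful; in fact any minimal subset not containing $0^\infty$ works. So the real content is: \emph{every beta shift contains a minimal set other than $0^\infty$}.

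First I would recall that $\Omega_\beta = \{\omega \in \Omega_{\lceil\beta\rceil} : \sigma^k(\omega) \preceq \omega^{(\beta)}\text{ for all }k\ge 0\}$, where $\omega^{(\beta)}$ is the $\beta$-expansion of $1$. The point $\omega^{(\beta)}$ itself lies in $\Omega_\beta$ (its shifts satisfy the lexicographic constraint by~\eqref{eq:beta}), and since $\omega^{(\beta)}_1 = \lfloor\beta\rfloor \ge 1$, it is not $0^\infty$. However $\omega^{(\beta)}$ need not be minimal. The standard fix is to pass to a minimal subsystem of the orbit closure of a carefully chosen point. The cleanest route: observe that $\Omega_\beta$ is closed and $\sigma$-invariant and nonempty, hence by Zorn's lemma it contains a minimal subsystem $M$. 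If $M = \{0^\infty\}$ were the only minimal set, then $\Omega_\beta$ would be proximal; but I claim this is impossible. Indeed, consider the point $\gamma = (\lfloor\beta\rfloor\, 0^{N})^\infty$ for $N$ large; for $N$ sufficiently large this is $P$-admissible in the appropriate sense — more precisely, one checks directly that every shift of $\gamma$ is lexicographically dominated by $\omega^{(\beta)}$ (since $\omega^{(\beta)}_1 = \lfloor\beta\rfloor$ and $\gamma$ starts with $\lfloor\beta\rfloor$ followed by zeros, while $\omega^{(\beta)}$ continues with something $\succeq 0\cdots$), so $\gamma\in\Omega_\beta$. This $\gamma$ is periodic, hence its orbit is a finite minimal set not containing $0^\infty$. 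Therefore $\Omega_\beta$ is not proximal, and Theorem~\ref{thm:dc1} gives DC$1$-chaos.

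The main obstacle is verifying that the periodic point $\gamma = (\lfloor\beta\rfloor\,0^N)^\infty$ (or whatever concrete periodic point one picks) genuinely lies in $\Omega_\beta$ for suitable $N$ — i.e. checking the lexicographic admissibility condition $\sigma^k(\gamma) \preceq \omega^{(\beta)}$ for all $k$. When $\beta$ is not an integer, $\omega^{(\beta)}_1 = \lfloor\beta\rfloor$ and $\omega^{(\beta)}$ is not eventually $\lfloor\beta\rfloor\,0^\infty$ (else $\beta$ would be a simple Parry number of a special shape), and one must confirm the suffix comparisons go the right way; a safe choice is $\gamma = (\lfloor\beta\rfloor - 1)^\infty$ when $\lfloor\beta\rfloor \ge 2$, or $(10^N)^\infty$ when $\lfloor\beta\rfloor = 1$, adjusting $N$. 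Alternatively, and perhaps more elegantly, one avoids explicit periodic points entirely: it is a classical fact (Parry) that $\Omega_\beta$ is never proximal because it always carries an ergodic measure of full support with positive entropy $\log\beta > 0$, so it has minimal sets other than $\{0^\infty\}$ (the support of such a measure, being shift-invariant and not reducing to the fixed point, contains a minimal set meeting $[\alpha]$ for some $\alpha \ne 0$). Either way, once non-proximality is established, Theorem~\ref{thm:dc1} closes the argument immediately.

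\begin{proof}
By the Lemma of Section~\ref{sec:beta}, every beta shift $\Omega_\beta$ is hereditary, so by Theorem~\ref{thm:dc1} it suffices to show that $\Omega_\beta$ is not proximal, i.e. that it contains a minimal set different from $\{0^\infty\}$. Since $\Omega_\beta$ is a nonempty closed $\sigma$-invariant set, Zorn's lemma furnishes a minimal subsystem $M\subset\Omega_\beta$. Suppose $M=\{0^\infty\}$ were the unique minimal set. Pick $N\in\mathbb{N}$ large enough that the periodic point $\gamma=(\lfloor\beta\rfloor\,0^{N})^\infty$ satisfies $\sigma^k(\gamma)\preceq\omega^{(\beta)}$ for every $k\ge 0$; such $N$ exists because $\omega^{(\beta)}_1=\lfloor\beta\rfloor$ and $\omega^{(\beta)}$ is not eventually zero after its first symbol unless $\beta\in\mathbb{N}$ (in which case $\Omega_\beta$ is the full shift and the conclusion is immediate), so the comparison $\lfloor\beta\rfloor\,0^{N}\lfloor\beta\rfloor\cdots \preceq \omega^{(\beta)}$ holds once $N$ exceeds the position of the first nonzero coordinate of $\sigma(\omega^{(\beta)})$. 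Then $\gamma\in\Omega_\beta$, $\gamma\neq 0^\infty$, and the orbit of $\gamma$ is a finite, hence minimal, subsystem of $\Omega_\beta$ not equal to $\{0^\infty\}$ — a contradiction. Therefore $\Omega_\beta$ is not proximal, and Theorem~\ref{thm:dc1} implies $\Omega_\beta$ is DC$1$-chaotic.
\end{proof}
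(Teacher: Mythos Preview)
Your overall strategy coincides with the paper's: use that beta shifts are hereditary, show they are not proximal, and then invoke Theorem~\ref{thm:dc1}. The paper establishes non-proximality by citing the literature (Sigmund \cite{Sig} and Thomsen \cite[Proposition~5.2]{Th}), whereas you supply an explicit, self-contained argument by exhibiting the periodic point $\gamma=(\lfloor\beta\rfloor\,0^N)^\infty\in\Omega_\beta$ for suitably large $N$. Your verification is correct: for non-integer $\beta$ one has $\omega^{(\beta)}\neq\lfloor\beta\rfloor\,0^\infty$, so $\sigma(\omega^{(\beta)})$ has a first nonzero coordinate at some position $p$, and taking $N\ge p$ forces $\gamma\prec\omega^{(\beta)}$ at position $p+1$, while all proper shifts of $\gamma$ begin with $0\prec\lfloor\beta\rfloor$. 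Two minor remarks: the phrase ``not eventually zero after its first symbol'' is ambiguous (for simple Parry numbers such as the golden ratio $\omega^{(\beta)}$ \emph{is} eventually zero), but what you actually need and use is only $\sigma(\omega^{(\beta)})\neq 0^\infty$; and the proof-by-contradiction wrapper is unnecessary, since the construction of $\gamma$ does not use the hypothesis that $\{0^\infty\}$ is the unique minimal set. Your route has the advantage of being elementary and avoiding external references; the paper's has the advantage of brevity.
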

\begin{proof} It is well known that beta shifts are never proximal (it follows for example
from the main result of \cite{Sig} or \cite[Proposition 5.2]{Th}).
Then one invokes Theorem \ref{thm:dc1} to finish the proof.
\end{proof}

Note that it is known that all beta shifts have
the unique measure of maximal entropy (see \cite{Sig}). It prompts us to state
the following conjecture which to our best knowledge remains open.

\begin{description}
\item[Conjecture] Every hereditary shift is \emph{intristically ergodic}, that is, it posses
the unique measure of maximal entropy.
\end{description}


\end{document}